\newcommand{\R}{{\mathbb R}}
\newcommand{\C}{{\mathbb C}}
\newcommand{\Hom}{{\rm Hom}}
\newcommand{\End}{{\rm End}}
\newcommand{\Tr}{{\rm Trace~}}
\newtheorem{cor}[subsubsection]{{\bf Corollary}}
\newtheorem{defn}[subsubsection]{{\bf Definition}}
\newtheorem{lem}[subsubsection]{{\bf Lemma}}
\newtheorem{thm}[subsubsection]{{\bf Theorem}}
\newtheorem{prop}[subsubsection]{{\bf Proposition}}
\newtheorem{rem}[subsubsection]{{\bf Remark}}
\theoremstyle{definition}
\numberwithin{equation}{subsection}
\title[Infinitesimal  $\boldsymbol{ \tau}$-Isospectrality]{On Infinitesimal $\boldsymbol{ \tau}$-Isospectrality of Locally Symmetric Spaces}
\author{Chandrasheel Bhagwat, Kaustabh Mondal and Gunja Sachdeva}
\date{\today}
\subjclass[2020]{22E40, 22E45, 53C35 }
\keywords{Spectral Theory, Representation Theory of Lie Groups}
\address{Indian Institute of Science Education and Research, Dr.\,Homi Bhabha Road, Pashan, Pune 411008,  INDIA.}
\email{cbhagwat@iiserpune.ac.in}
\address{Indian Institute of Science Education and Research, Dr.\,Homi Bhabha Road, Pashan, Pune 411008,  INDIA.}
\email{kaustabh.mondal@students.iiserpune.ac.in}
\address{BITS Pilani, K.K. BIRLA GOA Campus, NH 17B, Bypass Road, Zuarinagar, Sancoale, Goa 403726, India.}
\email{gunjas@goa.bits-pilani.ac.in}
\begin{document}

\begin{abstract}

Let $(\tau, V_{\tau})$ be a finite dimensional representation of a maximal compact subgroup $K$ of a connected non-compact semisimple Lie group $G$, and let  $\Gamma$ be a uniform torsion-free lattice in $G$. We obtain an infinitesimal version of the celebrated Matsushima-Murakami formula, which relates the dimension of the space of automorphic forms associated to $\tau$ and multiplicities of irreducible $\tau^\vee$-spherical spectra in $L^2(\Gamma \backslash G)$. This result gives a promising tool to study the joint spectra of all central operators on the homogenous bundle associated to the locally symmetric space and hence its infinitesimal $\tau$-isospectrality.  Along with this we prove that the almost equality of $\tau$-spherical spectra of two lattices assures the equality of their $\tau$-spherical spectra.


\end{abstract}

\maketitle
\section{Introduction}
\subsection{$\boldsymbol{\tau}$-Isospectrality and Representation Equivalence}  \label{center-action}Let $G$ be a connected non-compact semisimple Lie group with finite centre. Let $K$ be a maximal compact subgroup in $G$. Then $G/K$ is a symmetric space which carries a $G$-invariant Riemannian metric induced by the Ad($G$)-invariant inner product on the Lie algebra $\mathfrak{g}$ of $G$. For any finite dimensional complex representation $(\tau, V_{\tau})$ of $K$, one has the homogeneous vector bundle 
$E_{\tau}$ on $G/K$ (see Sec. \ref{iso-rep} for the details) whose smooth sections are given by the space  
\[\mathcal A^\infty(G/K, \tau): =  \{ \phi : G \rightarrow V_{\tau} \ | \ \phi(gk)=\tau(k^{-1})(\phi(g))\ \text{for all}\ k \in K, g \in G\}.\]
 
This space $\mathcal A^\infty(G/K, \tau)$ is $\mathfrak{U(g)}^K$-stable. In particular, the centre $\mathfrak{Z(g)}$
of the universal enveloping algebra acts on $\mathcal A^\infty(G/K, \tau)$, and the Casimir element in the centre induces a second order elliptic differential operator $\Delta_{\tau}$ on $\mathcal A^\infty(G/K, \tau)$, which is also induced from the Laplace-Beltrami operator acting on the smooth functions on $G/K$.\smallskip

Let $\Gamma$ be a uniform lattice in $G$. Then $X_{\Gamma}:=\Gamma \backslash G /K$ is a compact locally symmetric space which is manifold if $\Gamma$ acts freely on $G/K$. The space $X_{\Gamma}$ has a Riemannian metric induced from $G/K$.
 We denote by $V_{\Gamma, \tau}$ to be the space of all $\Gamma$-invariant smooth sections in $\mathcal A^\infty(G/K, \tau)$.
 Then $V_{\Gamma, \tau}$ is stable under the action of $\mathfrak{U(g)}^K$, in particular of the centre $\mathfrak{Z(g)}$. The Casimir element $C$, induces the second order self-adjoint elliptic operator $\Delta_{\tau, \Gamma}$ on $V_{\Gamma, \tau}$, which has non-negative discrete spectrum of eigenvalues with finite multiplicities. The set of eigenvalues with multiplicity is denoted by Spec($\Delta_{\tau, \Gamma}$). Such spectrum of a locally symmetric space is closely related to the multiplicity of irreducible representations occurring in the right regular representation $L^2(\Gamma \backslash G)$ of $G$. Here we mention the famous Matsushima-Murakami formula (\cite{MR0266238}, see also Prop. 2.4 of \cite{MR3299293}):

\begin{equation}\label{1.1.1}
      \text{Mult}_{\Delta_{\tau, \Gamma}}(\lambda) = \sum_{\pi \in \widehat{G}, \pi(C) =\lambda} m(\pi, \Gamma)\ {\rm dim}(\Hom_K(\tau^\vee, \pi|_K)),
     \end{equation} 
$ \text{where}, m(\pi, \Gamma)\ \text{is the multiplicity of}\ 
      \pi \text{ in the right regular representation}\ L^2(\Gamma \backslash G).$
\smallskip

Let $\widehat{G}_{\tau}: = \{ \pi \in \widehat{G} : \Hom_K(\tau, \pi|_K) \neq 0\}$. Two uniform lattices $\Gamma_1$ and $\Gamma_2$ are called \textit{$\tau$-representation equivalent} if $m(\pi, \Gamma_1)=m(\pi, \Gamma_2)$ for all $\pi \in \widehat{G}_{\tau}$. 
 From Eq. \ref{1.1.1}, it is easily observed that if $m(\pi, \Gamma_1)=m(\pi, \Gamma_2)$ for all $\pi \in \widehat{G}_{\tau^\vee}$, then Spec($\Delta_{\tau, \Gamma_1}$) = Spec($\Delta_{\tau, \Gamma_2}$) for $X_{\Gamma_1}$ and $X_{\Gamma_2}$, respectively. Therefore, if two lattices $\Gamma_1$ and $\Gamma_2$ are $\tau^\vee$-representation equivalent, then the corresponding elliptic operators have the same spectrum for the locally symmetric spaces $X_{\Gamma_1}$ and $X_{\Gamma_2}$, respectively i.e. they are \textit{$\tau$-isospectral} (see Prop. 2.5 of \cite{MR3299293}).\smallskip
 

 \begin{defn}
Two co-compact lattices $\Gamma_1$ and $\Gamma_2$ are called almost-$\tau$-representation equivalent if $m(\pi, \Gamma_1)$ is equal to $ m(\pi, \Gamma_2)$ for all but finitely many $\pi \in \widehat{G}_{\tau}$. 
\end{defn}
 
 In the first half of this paper, we give an affirmative answer of the question: whether the almost-$\tau$-representation equivalence implies $\tau$-representation equivalence between two uniform torsion free lattices $\Gamma_1$ and $\Gamma_2$ in non-compact symmetric space $G/K$ with arbitrary rank, and for any finite dimensional representation $(\tau, V_{\tau})$ of $K$.

\begin{thm} \label{thm1.3.1}
\textit{Let $G$ be a non-compact connected semisimple Lie group with finite center. If for two uniform torsion free lattices $\Gamma_1$ and $\Gamma_2$, $$m(\pi, \Gamma_1)=m(\pi, \Gamma_2)$$ for all but finitely many $\pi \in \widehat{G}_{\tau}$, then $\Gamma_1$ and $\Gamma_2$ are $\tau$-representation equivalent lattices.} 

\end{thm}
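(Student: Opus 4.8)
The plan is to leverage the rigidity of the set $\widehat{G}_\tau$ together with the known structure of $L^2(\Gamma\backslash G)$ for a uniform lattice. The starting observation is that for a uniform torsion-free lattice $\Gamma$, the right regular representation $L^2(\Gamma\backslash G)$ decomposes discretely as $\bigoplus_{\pi\in\widehat G} m(\pi,\Gamma)\,\pi$ with each $m(\pi,\Gamma)$ finite, and moreover the trivial representation occurs with multiplicity exactly $1$. Set $d(\pi,\tau):=\dim\Hom_K(\tau,\pi|_K)$; by Matsushima--Murakami (Eq.\ \eqref{1.1.1}) the quantity $\sum_{\pi:\pi(C)=\lambda} m(\pi,\Gamma)\,d(\pi,\tau^\vee)$ is finite for every $\lambda$. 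My strategy is to show that the hypothesis forces the ``defect'' measure $\sum_{\pi\in\widehat G_\tau}\bigl(m(\pi,\Gamma_1)-m(\pi,\Gamma_2)\bigr)\,\delta_\pi$, which a priori is a finitely supported signed integer combination, to vanish identically.

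\textbf{Step 1: Reduce to a finite set of Casimir eigenvalues.} By hypothesis there is a finite set $S\subset\widehat G_\tau$ outside which $m(\pi,\Gamma_1)=m(\pi,\Gamma_2)$. Let $\Lambda=\{\pi(C):\pi\in S\}$, a finite set of real numbers. For any $\lambda\notin\Lambda$ and any finite-dimensional $K$-type $\sigma$, applying \eqref{1.1.1} with $\tau=\sigma^\vee$ gives $\mathrm{Mult}_{\Delta_{\sigma,\Gamma_1}}(\lambda)=\mathrm{Mult}_{\Delta_{\sigma,\Gamma_2}}(\lambda)$, since every $\pi$ contributing to that eigenvalue with $d(\pi,\sigma)\ne 0$ lies in $\widehat G_\sigma\subseteq\widehat G_{?}$ and has Casimir eigenvalue outside $\Lambda$, hence lies outside $S$. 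So the two spectra $\mathrm{Spec}(\Delta_{\sigma,\Gamma_i})$ can differ only at eigenvalues in the fixed finite set $\Lambda$, for \emph{every} $K$-type $\sigma$ simultaneously.

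\textbf{Step 2: Use a Casimir-separation / heat-kernel or resolvent argument to promote equality.} The key point is that $\Gamma_1$ and $\Gamma_2$, being cocompact, have $\mathrm{vol}(\Gamma_1\backslash G)=\mathrm{vol}(\Gamma_2\backslash G)$ forced by the trivial-representation normalization (the trivial representation lies in $\widehat G_{\mathbf 1}$, contributes to $\lambda=0$, and its multiplicity is $1$ on both sides; more generally one can compare Weyl-law leading terms). One then runs the following pigeonhole: if some $\pi_0\in S$ had $m(\pi_0,\Gamma_1)\ne m(\pi_0,\Gamma_2)$, choose $\sigma$ to be a $K$-type occurring in $\pi_0|_K$ (so $d(\pi_0,\sigma)\ne 0$) and isolate the eigenvalue $\lambda_0=\pi_0(C)$. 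Among all $\pi$ with $\pi(C)=\lambda_0$ and $d(\pi,\sigma)\ne 0$, only those in $S$ can have unequal multiplicities; but \emph{all} of $S$ is finite, so the discrepancy $\mathrm{Mult}_{\Delta_{\sigma,\Gamma_1}}(\lambda_0)-\mathrm{Mult}_{\Delta_{\sigma,\Gamma_2}}(\lambda_0)=\sum_{\pi\in S,\ \pi(C)=\lambda_0}\bigl(m(\pi,\Gamma_1)-m(\pi,\Gamma_2)\bigr)d(\pi,\sigma)$ is a genuine finite sum. The goal is to show this vanishes for all $\sigma$ and conclude $m(\pi,\Gamma_1)=m(\pi,\Gamma_2)$ for each $\pi\in S$ by separating the finitely many $\pi$'s with $\pi(C)=\lambda_0$ using their distinct $K$-type decompositions (the map $\pi\mapsto(d(\pi,\sigma))_\sigma$ is injective on $\widehat G$ by the subquotient theorem / infinitesimal character plus $K$-type data), i.e. solve the resulting finite linear system.

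\textbf{Main obstacle.} The real content is Step 2's ``promotion'': explaining why almost-equality of the multiplicity sequences, rather than just eventual equality, forces exact equality on the exceptional set $S$. The clean way is a trace-formula or heat-kernel input: the theta-like series $\sum_{\pi} m(\pi,\Gamma_i)\,\mathrm{tr}(\pi(h))$ for suitable $K$-finite $h\in C_c^\infty(G)$ (or its $\tau$-twisted/vector-bundle heat-trace analogue $\mathrm{Tr}(e^{-t\Delta_{\tau,\Gamma_i}})$) is, by the Selberg trace formula for cocompact $\Gamma_i$, a sum over conjugacy classes; comparing the two sides as $t\to 0$ controls the volume and the local geometry, and one argues that a finitely supported spectral discrepancy is incompatible with the analytic structure (e.g.\ the difference of heat traces would be a finite exponential sum $\sum_{\pi\in S}c_\pi e^{-t\pi(C)}$ that must match a difference of geometric terms; pushing $t\to\infty$ and using positivity/integrality of the $c_\pi$ together with $d(\pi,\tau)\ge 0$ kills it). I expect the technical heart to be making this rigorous without circularity — i.e.\ extracting enough independent linear relations (over all $K$-types $\sigma$, and possibly all test functions $h$) to pin down each integer $m(\pi,\Gamma_1)-m(\pi,\Gamma_2)$ for $\pi\in S$, using that these $\pi$ are finitely many and pairwise distinguished by their restrictions to $K$.
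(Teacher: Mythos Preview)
Your proposal has two genuine gaps that prevent it from going through.

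First, in Step~1 you vary the $K$-type $\sigma$ and claim Matsushima--Murakami gives $\mathrm{Mult}_{\Delta_{\sigma,\Gamma_1}}(\lambda)=\mathrm{Mult}_{\Delta_{\sigma,\Gamma_2}}(\lambda)$ for $\lambda\notin\Lambda$. But the hypothesis only says $m(\pi,\Gamma_1)=m(\pi,\Gamma_2)$ for $\pi\in\widehat G_\tau\setminus S$; it tells you nothing about representations in $\widehat G_\sigma\setminus\widehat G_\tau$. Your ``$\widehat G_\sigma\subseteq\widehat G_{?}$'' betrays the problem: there is no containment to invoke. So the only $\sigma$ for which Step~1 is valid is $\sigma=\tau$ itself (or its irreducible constituents), and then Step~2's linear-system idea collapses to a single equation per Casimir value, which cannot separate the $\pi\in S$.

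Second --- and this you correctly flag as the ``main obstacle'' --- you never produce any relation forcing the finite discrepancy $\sum_{\pi\in S}t_\pi(\cdots)$ to vanish. The heat-trace sketch does not close: the difference of geometric sides is a genuine sum of orbital integrals over (distinct) length spectra of $\Gamma_1,\Gamma_2$, not zero, and the $t\to\infty$ limit with signed integers $t_\pi$ gives no contradiction. Likewise, the claim that $\pi\mapsto(d(\pi,\sigma))_\sigma$ separates representations with fixed $\pi(C)$ is false already for $\mathrm{SL}_2(\mathbb R)$ principal series (and is moot anyway by the first gap).

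The paper's argument is different in kind. It writes the difference of the two Selberg trace formulas for $\tau$-equivariant test functions $f\in C_c^\infty(G,K,V_\tau)$ (so that only $\widehat G_\tau$ contributes spectrally), obtaining
\[
\sum_{\pi\in S} t_\pi\,\chi_\pi(h_f)\;=\;\text{(difference of orbital integrals)}.
\]
The key geometric step is to construct a left-$K$-stable open set $B\subset G$ disjoint from every conjugacy class $[\gamma]_G$ with $\gamma\in(\Gamma_1\cup\Gamma_2)\setminus\{e\}$ (possible since $\Gamma_i$ are torsion-free and these classes avoid $K$). For $f$ supported in $B$ the entire geometric side vanishes, so $\int_B h_f\cdot\bigl(\sum_{\pi\in S}t_\pi\phi_\pi\bigr)=0$. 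Since such $h_f$ separate points of $B$, the real-analytic function $\sum t_\pi\phi_\pi$ vanishes on $B$, hence on the whole regular set; linear independence of Harish-Chandra characters then forces every $t_\pi=0$. This trio --- vanishing of orbital integrals on a carefully chosen open set, analyticity of character functions, and linear independence of characters --- is precisely the missing engine in your outline.
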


\begin{rem}
Consequently, the above hypothesis implies that $X_{\Gamma_1}$ and $X_{\Gamma_2}$ are $\tau^\vee$-isospectral. When $\tau$ is trivial, this reduces to the results in \cite{MR2836013}. 
In \cite{Bha-Sa}, a special case of Thm.\ \ref{thm1.3.1} is proved for the group ${\rm PSL}(2, \R)$.

\end{rem}
\smallskip

\subsection{Infinitesimal $\boldsymbol{\tau}$-Isospectrality}
For an infinitesimal character $\chi$ of $\mathfrak{Z(g)}$, let $V_{\chi, \Gamma, \tau}:=\{ \phi \in V_{\Gamma, \tau} \ | \ z. \phi = \chi(z) \phi$ for all $z \in \mathfrak{Z(g)}\}$.\smallskip

We define a refinement of the notion of $\tau$-isospectrality as follows.
\smallskip

\begin{defn}
Two locally symmetric spaces $X_{\Gamma_1}$ and $X_{\Gamma_2}$ of non-compact type are \textit{infinitesimally $\tau$-isospectral} if
\[ {\rm dim}\ V_{\chi, \Gamma_1, \tau} = {\rm dim}\ V_{\chi, \Gamma_2, \tau}
\ \text{ for all} \ \chi \in \widehat{\mathfrak{Z(g)}}.\] 
\end{defn}

 Let $[\chi]$ be the set of all irreducible representations of $G$ which has infinitesimal character $\chi$. It is known that $[\chi]$ is a finite subset of $\widehat{G}$ [Cor. 10.37, \cite{Kn}]. We have obtained the following variant of Matsushima-Murakami formula: 

\begin{thm}\label{thm2}
Let $G$ be a connected non-compact semisimple Lie group. Assume that $\Gamma$ be a uniform lattice in $G$. Then for any $\chi \in \widehat{\mathfrak{Z(g)}}$ and for any finite dimensional representation $\tau$ of $K$, 
   \[ {\rm dim}\ V_{\chi, \Gamma, \tau}=\sum_{\pi \in [\chi]} m(\pi, \Gamma)\ {\rm dim}(\Hom_K(\tau^\vee, \pi|_K)). \]
\end{thm}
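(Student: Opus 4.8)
The plan is to realize $V_{\chi,\Gamma,\tau}$ as a space of $K$-equivariant functions inside $L^2(\Gamma\backslash G)$ and then decompose the latter. First I would recall the standard identification
\[
V_{\Gamma,\tau}\;\cong\;\bigl(C^\infty(\Gamma\backslash G)\otimes V_\tau\bigr)^K\;\cong\;\Hom_K\bigl(V_{\tau^\vee},\,C^\infty(\Gamma\backslash G)\bigr),
\]
sending $\phi\colon\Gamma\backslash G\to V_\tau$ with $\phi(gk)=\tau(k^{-1})\phi(g)$ to the corresponding intertwiner. Under this identification the action of $\mathfrak Z(\mathfrak g)$ on $V_{\Gamma,\tau}$ coming from the enveloping algebra matches the action of $\mathfrak Z(\mathfrak g)$ on $C^\infty(\Gamma\backslash G)$ by right differentiation, because the $\mathfrak Z(\mathfrak g)$-action is by $G$-invariant operators and hence commutes with the $K$-equivariance condition. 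Therefore
\[
V_{\chi,\Gamma,\tau}\;\cong\;\Hom_K\bigl(V_{\tau^\vee},\,C^\infty(\Gamma\backslash G)_\chi\bigr),
\]
where $C^\infty(\Gamma\backslash G)_\chi$ is the $\chi$-eigenspace of $\mathfrak Z(\mathfrak g)$ in $C^\infty(\Gamma\backslash G)$.

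Next I would invoke the discrete decomposition of the right regular representation for a uniform lattice: since $\Gamma\backslash G$ is compact, $L^2(\Gamma\backslash G)=\widehat{\bigoplus}_{\pi\in\widehat G} m(\pi,\Gamma)\,\pi$ with each $m(\pi,\Gamma)<\infty$, and the smooth vectors decompose compatibly, $C^\infty(\Gamma\backslash G)=\bigoplus_{\pi\in\widehat G} m(\pi,\Gamma)\,\pi^\infty$ (algebraic direct sum after passing to $K$-finite or smooth vectors, with appropriate care). Each irreducible unitary $\pi$ has a single infinitesimal character, say $\chi_\pi$, so $z\in\mathfrak Z(\mathfrak g)$ acts on the $\pi$-isotypic part by the scalar $\chi_\pi(z)$. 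Hence the $\chi$-eigenspace picks out exactly those $\pi$ with $\chi_\pi=\chi$, i.e. $\pi\in[\chi]$:
\[
C^\infty(\Gamma\backslash G)_\chi\;=\;\bigoplus_{\pi\in[\chi]} m(\pi,\Gamma)\,\pi^\infty.
\]
Applying $\Hom_K(V_{\tau^\vee},-)$, which is exact and additive on this (finite, by Cor.\ 10.37 of \cite{Kn}) sum, and using $\Hom_K(V_{\tau^\vee},\pi^\infty)\cong\Hom_K(V_{\tau^\vee},\pi|_K)$ (since $\tau$ is finite dimensional, every $K$-map into $\pi^\infty$ lands in the $K$-finite part and the $K$-structure of $\pi^\infty$ and $\pi$ agree on finite-dimensional isotypes), I obtain
\[
\dim V_{\chi,\Gamma,\tau}\;=\;\sum_{\pi\in[\chi]} m(\pi,\Gamma)\,\dim\Hom_K\bigl(\tau^\vee,\pi|_K\bigr),
\]
as claimed.

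The main obstacle I anticipate is the analytic bookkeeping in passing between $L^2$, $C^\infty$, and $K$-finite vectors: one must justify that taking the $\mathfrak Z(\mathfrak g)$-eigenspace commutes with the Hilbert-space direct sum decomposition (so that no infinite-dimensional "tail" contributes), and that $\Hom_K(V_{\tau^\vee},-)$ may be computed termwise. This is handled by elliptic regularity — the eigenvalue equation $z\phi=\chi(z)\phi$ for the Casimir is elliptic, so $V_{\chi,\Gamma,\tau}$ is finite dimensional and consists of smooth (indeed real-analytic) sections — together with the finiteness of $[\chi]$ and the admissibility of each $\pi$ (each $K$-type of $\pi$ has finite multiplicity), which together force the sum to be finite and the termwise computation legitimate. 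The remaining steps — the equivalence $\mathcal A^\infty(G/K,\tau)^\Gamma\cong\Hom_K(V_{\tau^\vee},C^\infty(\Gamma\backslash G))$ and the matching of the two $\mathfrak Z(\mathfrak g)$-actions — are the same formal manipulations that underlie the classical Matsushima-Murakami formula \eqref{1.1.1}, so the proof is essentially a refinement of that argument where the Casimir eigenvalue $\lambda$ is replaced by the full infinitesimal character $\chi$.
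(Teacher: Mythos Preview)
Your argument is correct and rests on the same decomposition of $L^2(\Gamma\backslash G)$ and the same finiteness facts (finiteness of $[\chi]$, admissibility of each $\pi$) that the paper uses. The difference is purely in presentation. You package the whole computation in the single Frobenius--reciprocity isomorphism $V_{\Gamma,\tau}\cong\Hom_K\bigl(V_{\tau^\vee},C^\infty(\Gamma\backslash G)\bigr)$ and then apply the functor $\Hom_K(V_{\tau^\vee},-)$ to the $\chi$-eigenspace decomposition in one stroke. The paper instead unpacks this isomorphism in coordinates: it fixes an orthonormal basis $\{v_i\}$ of $V_\tau$, writes $\phi=\sum_i\phi_i v_i$, projects each scalar component $\phi_i$ onto the isotypic pieces $W_{\pi_t}$, checks by hand (via the matrix entries $a_{ij}(k^{-1})$ and Schur's lemma) that the span of the projections realises a copy of $\tau^\vee$ inside $W_{\pi_t}(\tau^\vee)$, and thereby establishes the two inequalities $\dim V_{\chi,\Gamma,\tau}\le\sum_{\pi\in[\chi]}m(\pi,\Gamma)\dim\Hom_K(\tau^\vee,\pi)$ and its reverse separately; it also reduces first to irreducible $\tau$ and sums up at the end. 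Your functorial phrasing is shorter and makes the role of $\tau^\vee$ transparent, while the paper's explicit bases make the passage from $L^2$ to smooth $\chi$-eigenvectors concrete without invoking the ``analytic bookkeeping'' you flag as the main obstacle.
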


\begin{rem}
For the rank one semisimple Lie group $G$, an infinitesimal character $\chi \in \widehat{\mathfrak{Z(g)}}$ is completely determined by its value on the Casimir element. Therefore the above result becomes the earlier mentioned Matsushima-Murakami formula Eq. \ref{1.1.1}. 
\end{rem}

\begin{rem}
    Let $\tau=\tau_p$ be the $p$-th exterior power of the adjoint representation of $K$ on $\mathfrak{p}^*_{\mathbb{C}}$. The associated homogeneous vector bundle is identified with the $p$-th exterior product of the co-tangent bundle on $G/K$. In \cite{MR0222908} , Matsushima proved the relationship between the dimension of harmonic $p$-forms on $X_{\Gamma}$ and the multiplicity of irreducible representations in $L^2(\Gamma \backslash G)$ that occur with a nonzero $\tau_p$-isotypic component. Therefore Thm.\ref{thm2} can be seen as an infinitesimal version of Eq. \ref{1.1.1}.
    \end{rem}

\begin{cor}\label{cor2}
 If $\Gamma_1$ and $\Gamma_2$ are $\tau^\vee$-representation equivalent then the spaces $X_{\Gamma_1}$ and $X_{\Gamma_2}$ are infinitesimally $\tau$-isospectral. 
\end{cor}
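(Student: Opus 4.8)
The plan is to deduce Corollary~\ref{cor2} directly from Theorem~\ref{thm2}, using the hypothesis of $\tau^\vee$-representation equivalence. First I would recall what $\tau^\vee$-representation equivalence means: by definition, $m(\pi, \Gamma_1) = m(\pi, \Gamma_2)$ for all $\pi \in \widehat{G}_{\tau^\vee}$, that is, for all irreducible $\pi$ with $\Hom_K(\tau^\vee, \pi|_K) \neq 0$. The key observation is that in the sum appearing in Theorem~\ref{thm2}, only representations $\pi$ with $\dim \Hom_K(\tau^\vee, \pi|_K) \neq 0$ contribute a nonzero term; for all other $\pi$ the summand vanishes regardless of the value of $m(\pi, \Gamma)$.

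Concretely, fix an arbitrary infinitesimal character $\chi \in \widehat{\mathfrak{Z(g)}}$. Applying Theorem~\ref{thm2} to $\Gamma_1$ and to $\Gamma_2$ gives
\[
\dim V_{\chi, \Gamma_i, \tau} = \sum_{\pi \in [\chi]} m(\pi, \Gamma_i)\, \dim \Hom_K(\tau^\vee, \pi|_K), \qquad i = 1, 2.
\]
In this finite sum over $[\chi]$, split the index set into those $\pi$ lying in $\widehat{G}_{\tau^\vee}$ and those not. For $\pi \notin \widehat{G}_{\tau^\vee}$ the factor $\dim \Hom_K(\tau^\vee, \pi|_K)$ is zero, so these terms contribute nothing to either sum. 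For $\pi \in [\chi] \cap \widehat{G}_{\tau^\vee}$, the hypothesis gives $m(\pi, \Gamma_1) = m(\pi, \Gamma_2)$, so the corresponding summands agree term by term. Hence the two sums are equal, i.e.\ $\dim V_{\chi, \Gamma_1, \tau} = \dim V_{\chi, \Gamma_2, \tau}$. Since $\chi$ was arbitrary, this is exactly the statement that $X_{\Gamma_1}$ and $X_{\Gamma_2}$ are infinitesimally $\tau$-isospectral.

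There is essentially no obstacle here: the corollary is a formal consequence of the formula in Theorem~\ref{thm2}, and the only point requiring a moment's care is matching the correct dual — the contributing representations are precisely those in $\widehat{G}_{\tau^\vee}$, which is exactly the set on which $\tau^\vee$-representation equivalence is assumed, so the hypothesis is perfectly calibrated to the formula. One should also note the finiteness of $[\chi]$ (cited from [Cor.~10.37, \cite{Kn}]) to ensure all sums are finite and the rearrangement is unproblematic, though this is already built into the statement of Theorem~\ref{thm2}.
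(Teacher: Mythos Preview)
Your proof is correct and is exactly the intended argument: the paper states Corollary~\ref{cor2} as an immediate consequence of Theorem~\ref{thm2} without giving a separate proof, and your deduction---that only $\pi \in \widehat{G}_{\tau^\vee}$ contribute nonzero terms to the sum, and on these the multiplicities agree by hypothesis---is precisely the one-line reasoning the authors have in mind.
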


\begin{rem}
If $\tau$ is trivial, the above formula in Thm.~\ref{thm2} reduces to \cite{MR2836013}.
\end{rem}

\begin{rem}
The finite set $[\chi]$ of all irreducible representations of a real reductive group (informally a $\chi$-packet) is quite close to Langlands L-packet that consists of irreducible admissible representations having same L-parameter. It is known that the irreducible representations from a fixed L-packet have same infinitesimal character i.e., they cannot be distinguished by the spectral data. According to our knowledge, the converse of this is not known in general. It is interesting to formulate a suitable variant of Matsushima-Murakami formula in terms of L-packet instead, and that might reflect some analogous relationship between the multiplicities of irreducible representations in a given L-packet and isospectrality.
\end{rem}

\subsection{Literature Review and Methodology} We briefly review some literature related to the main results in this paper. The question : \textit{Can the space $X_{\Gamma}$ (upto isometry) be determined by its spectrum?} has been of great interest over last few decades. The works of Milnor (see \cite{Mil}) and Vigne\'ras (see \cite{Vigneras1980}) are in the frontline to answer this negatively. Later, the construction of non-isometric isospectral manifolds by Sunada brought significant arithmetic flavour in this context \cite{Sunada1985}. In another important work  \cite{MR2676159} in this context, C.\,S.\,Rajan studied  the appropriate arithmetic properties which are determined by the spectrum. Other than the ambitious inverse spectral problem there is another converse of the above discussion: \textit{Given two $\tau$-isospectral spaces $X_{\Gamma_1}$ and $X_{\Gamma_2}$, are $\Gamma_1$ and $\Gamma_2$ $\tau^\vee$-representation equivalent?} This is called the representation-spectral converse for the pair $(G, K)$. This has gotten attention for quite some time. There is a slightly weaker version of this, defined as follows: 

\begin{defn}
Two locally symmetric spaces $X_{\Gamma_1}$ and $X_{\Gamma_2}$ with the same universal cover $G/K$ are said to be almost-$\tau$-isospectral if $\text{Mult}_{\Delta_{\tau, \Gamma_1}}(\lambda)=\text{Mult}_{\Delta_{\tau, \Gamma_2}}(\lambda)$ for all but finitely many $\lambda$. 
\end{defn}

In the direction of `{\it almost-$\tau$-representation equivalence implies isospectrality}',  several approaches have been made. In \cite{MR4099801}, E. Lauret and R. Miatello
  answered this question in the case of compact Riemannian symmetric space $G/K$, i.e., for compact group $G$. 
 They further proved that the multiplicity of an appropriate finite subset of $\widehat{G}_{\tau}$ determines all multiplicities. Later, they studied the representation-spectral converse in the context of simply connected compact Riemannian symmetric space $G/K$ of rank one and proved there are infinitely many $\tau \in \widehat{K}$ such that almost-$\tau$-isospectrality implies $\tau^\vee$-representation equivalence and hence $\tau$-isospectral \cite{MR4337467}. \smallskip

 For many non-compact homogeneous Riemannian symmetric spaces, Pesce \cite{MR1363805} proved the validity of representation-spectral converse where $\tau$ is trivial. When $\tau$ is trivial the $\tau$-spectra are called spherical spectra. Bhagwat-Rajan\cite{MR2836013} answered that almost-spherical representation equivalence implies spherical representation equivalence. In fact, they described\cite{MR2836013} that for a spherical irreducible representation $\pi \in \widehat{G}_1$, there exists a character $\lambda_{\pi}$ of the algebra of $G$-invariant differential operators on $X_{\Gamma}$ such that $\text{Mult}(\lambda_{\pi})=m(\pi, \Gamma)$ and conversely. Later, Kelmer \cite{MR3256189} obtained several density results which relates the isospectrality and representation equivalence via the notion of length equivalence for the case of $X_{\Gamma}$ with real rank one and of non-compact type (having non-compact universal cover), in particular for compact hyperbolic manifolds.\smallskip

Let us briefly outline our methods for obtaining Thm.$\ref{thm1.3.1}$. We employ the well-established and effective tool, namely the Selberg Trace Formula to study the representation spectra of $L^2(\Gamma \backslash G)$ with respect to a convolution operator. Since our focus is on the $\tau$-spherical irreducible representations in $L^2(\Gamma \backslash G)$, we need to annihilate the non $\tau$-spherical component using appropriate test functions. To achieve this, we instead consider the right regular representation $L^2(\Gamma \backslash G, V_{\tau})$ (see $\ref{2.1.1}$) of $G$. We also utilize an algebra, denoted by $C_c^{\infty}(G, K, V_{\tau})$ consisting of compactly supported smooth $\End({V_{\tau}})$-valued test functions on $G$ which are $\tau$-equivariant (see $\ref{2.1.3}$). One can suitably define the convolution operator on $L^2(\Gamma \backslash G, V_{\tau})$ for such test functions. It is straightforward to compute both the spectral and geometric expansions of the trace of these convolution operators, which leads to the required Selberg Trace Formula for $L^2(\Gamma \backslash G, V_{\tau})$ (see \ref{3.1.8}). The advantage of using such operator valued $\tau$-equivariant test functions is the corresponding convolution operator annihilates the non $\tau$-spherical representation spectra (see \ref{prop4.1.3}). Moreover, due to our hypothesis, taking the difference between the trace formula for $\Gamma_1$ and $\Gamma_2$ yields a finite linear combination of Harish Chandra character distribution associated with finitely many $\tau$-spherical irreducible representations. In Prop. $\ref{prop4.1.6}$, we construct a left $K$-saturated open set in $G$ that avoids all conjugacy classes $[\gamma]_G$ for $\gamma \in \Gamma_1 \cup \Gamma_2$. For the aforementioned test functions supported on this $K$-stable open set, the orbital integrals, and thus the entire geometric side vanish. Finally, the remainder of the argument relies on the analyticity of locally integrable character functions and the linear independence of character distributions for inequivalent irreducible representations. 


\vspace{8pt}

The organization of the article is as follows: In Sec. [\ref{sec2}], we set up the preliminaries and recall the Harish Chandra character distributions, Isospectrality and Representation equivalence. In Sec. [\ref{sec3}], we explicitly calculate the Selberg Trace Formula for $L^2(\Gamma \backslash G), V_{\tau})$. In Sec. [\ref{sec4}], we discuss the required lemmas and propositions to prove Thm.[\ref{thm1.3.1}]. In Sec. [\ref{sec5}], we provide some observations and complete the proof of the Thm. \ref{thm2}. In Sec.\ [\ref{DS}], we consider the case where the group $G$ has discrete series representations and using the results from \cite{MR1104438} and Thm.~\ref{thm2}, we show that the dimension of $\chi$-eigenspace of the automorphic forms of type $\tau^\vee$ is equal to the $q_{\lambda}$-th $L^2$-cohomology of the automorphic line bundle $\Gamma \backslash \mathcal{L}_{\lambda}$ associated to the discrete series representation $\pi$ with minimal $K$-type $\tau$ and infinitesimal character $\chi$.

\smallskip

{\bf ACKNOWLEDGEMENTS:} K.\, Mondal thanks the program Zariski dense subgroups, number theory and geometric application (\url{ICTS/zdsg2024/01}) at International Centre for Theoretical Sciences(ICTS), Bengaluru, India in January, 2024. He also sincerely thanks Prof.\, C.\, S.\, Rajan for suggesting him to write the $\tau$-equivariant Trace Formula during this workshop. He also thanks Prime Minister Research Fellowship (PMRF) Govt. of India for supporting this work partially. G. Sachdeva's research is supported by Department of Science Technology-Science and Engineering Research Board, Govt. of India POWER Grant [SPG/2022/001738].\smallskip

\section{Preliminaries}\label{sec2}

\subsection{Basic setup} Let $G$ be a connected non-compact semisimple Lie group and let $K$ be a maximal compact subgroup of $G$ with Lie algebras $\mathfrak{g}$ and $\mathfrak{k}$, respectively. Then the homogeneous space $G /K$ is a symmetric space with a $G$-invariant metric. Let $(\tau, V_{\tau})$ be a finite dimensional representation of $K$. Let $\Gamma$ be a uniform torsion-free lattice in $G$.\smallskip

 We consider the right regular representation $\rho$ of $G$ on the space 

\begin{equation}\label{2.1.1}
\begin{split}L^2(\Gamma \backslash G, V_{\tau}) & =\{ \phi : G \rightarrow V_{\tau}\ | \ \phi \text{ is measurable}, \\ & \phi(\gamma g)  =\phi(g) \text{ for all }   \gamma \in \Gamma,
 \int \limits_{\Gamma \backslash G} |\phi(g)|^2\ dg  < \infty\}.
\end{split}
\end{equation}

 Since $\Gamma \backslash G$ is compact, the right regular representation $L^2(\Gamma \backslash G)$ of $G$ is completely reducible and each irreducible subrepresentation occurs with finite multiplicity. In other words, $$L^2(\Gamma \backslash G) \cong \widehat{\bigoplus_{\pi \in \widehat{G}}} \ m(\pi, \Gamma)\ W_{\pi}$$ with $m(\pi, \Gamma) < \infty$ for all $\pi \in \widehat{G}$.
\smallskip

We choose an orthonormal basis for each copy of $\pi$, taking union of those we get an orthonormal basis $\mathcal{W}$ of $L^2(\Gamma \backslash G)$. Let $\{v_i\}_{i=1}^{n}$ be an orthonormal basis of $V_{\tau}$ \label{orthonormalbasis}. For any $\psi \in \mathcal{W}$, we define
 $\psi_i(x):=\psi(x)v_i$. 
 Then $\{ \psi_i \ | \ \psi \in \mathcal{W} \text{ for all } 1 \leq i \leq n\}$ forms an orthonormal basis of $L^2(\Gamma \backslash G, V_{\tau})$. Note that 
 $L^2(\Gamma \backslash G, V_{\tau}) = L^2(\Gamma \backslash G) \otimes V_{\tau}$.\smallskip

 Therefore, we get
\begin{equation}\label{2.1.2}L^2(\Gamma \backslash G, V_{\tau}) \cong 
\widehat{\bigoplus\limits_{\pi \in \widehat{G}}} \ m(\pi, \Gamma)\ (W_{\pi} \otimes V_{\tau} ).\end{equation}

For each copy of $W_{\pi} \otimes V_{\tau}$, we identify a subspace $V_{\pi}$ in $L^2(\Gamma \backslash G, V_{\tau})$. The subspace $V_{\pi}$ is not an irreducible $G$-subspace, rather it is direct sum of dim $V_\tau$ many copies of $W_{\pi}$.\smallskip

We introduce a space $C_c^{\infty}(G, K, V_{\tau})$ defined by 
\begin{equation}\label{2.1.3}
    \begin{split}
\{ f : G \rightarrow \End(V_{\tau}) \ | \ & f \text{ is compactly supported and smooth such that }\\
& f(kx)=f(x)\tau(k^{-1})  \text{ for all }  x \in G, k \in K\}. 
    \end{split}
\end{equation} 

For $f \in C_c^{\infty}(G, K, V_{\tau})$, let $h_f(x):=\Tr(f(x))$ for all $x \in G$. Now for any $f \in C_c^{\infty}(G, K, V_{\tau})$, we define the convolution operator $\rho(f) :L^2(\Gamma \backslash G, V_{\tau} ) \rightarrow L^2(\Gamma \backslash G, V_{\tau})$ by  \begin{equation}\label{2.1.4}
\rho(f) \phi (x) = \int \limits_G f(y) (\phi(gy)) \ dy \quad \text{for all}\ 
\phi \in L^2(\Gamma \backslash G, V_{\tau} ).
\end{equation}

\begin{prop}\label{prop2.1.1}
 \textit{Each $V_{\pi}$ is $\rho(f)$-stable subspace for all $f \in C_c^{\infty}(G, K, V_{\tau})$.}\smallskip
\end{prop}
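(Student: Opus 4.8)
The plan is to show that the convolution action $\rho(f)$ for $f \in C_c^\infty(G,K,V_\tau)$ preserves each summand $V_\pi$ by tracking how $\rho(f)$ interacts with the decomposition in Eq.~\ref{2.1.2}. First I would recall the structure: under $L^2(\Gamma\backslash G, V_\tau) = L^2(\Gamma\backslash G)\otimes V_\tau$, the subspace $V_\pi$ corresponds to $(m(\pi,\Gamma)\,W_\pi)\otimes V_\tau$ inside $\widehat{\bigoplus}_{\pi'}\, m(\pi',\Gamma)(W_{\pi'}\otimes V_\tau)$, i.e.\ to the $\pi$-isotypic part of $L^2(\Gamma\backslash G)$ tensored with all of $V_\tau$. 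So it suffices to check two things: that $\rho(f)$ respects the tensor decomposition in the sense that it maps $(\text{$\pi$-isotypic})\otimes V_\tau$ into itself, and that on the $L^2(\Gamma\backslash G)$ factor it only mixes vectors within a fixed isotypic component.

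The key computational step is to rewrite $\rho(f)$ in terms of the ordinary right regular action. For $\phi = \sum_j \psi^{(j)}\otimes w_j$ with $\psi^{(j)} \in L^2(\Gamma\backslash G)$ and $w_j \in V_\tau$, and writing $f(y)$ as an endomorphism of $V_\tau$ with matrix entries $f_{ab}(y)$ relative to the orthonormal basis $\{v_i\}$, one expands
\[
(\rho(f)\phi)(x) = \int_G f(y)\big(\phi(xy)\big)\,dy = \sum_{a,b}\Big(\int_G f_{ab}(y)\,\psi^{(j)}(xy)\,dy\Big)\, \langle w_j, v_b\rangle\, v_a ,
\]
so that $\rho(f) = \sum_{a,b}\big(R(f_{ab})\otimes E_{ab}\big)$ where $R(f_{ab})$ is the scalar convolution operator $\psi \mapsto \int_G f_{ab}(y)\,\psi(\,\cdot\, y)\,dy$ on $L^2(\Gamma\backslash G)$ and $E_{ab}\in\End(V_\tau)$ is the corresponding matrix unit. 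Now the scalar convolution operator $R(\varphi)$ for $\varphi \in C_c^\infty(G)$ intertwines the right regular representation of $G$ with itself — it is a standard fact that $R(\varphi)$ commutes with left translations (hence preserves $\Gamma$-invariance) and, being in the image of the group algebra acting via $\rho$, carries each $G$-isotypic component $m(\pi,\Gamma)W_\pi$ into itself. Therefore each $R(f_{ab})\otimes E_{ab}$ maps $(m(\pi,\Gamma)W_\pi)\otimes V_\tau$ into itself, and summing over $a,b$ gives that $\rho(f)$ preserves $V_\pi$.

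The one point requiring a little care — and the main obstacle, such as it is — is the $\tau$-equivariance condition $f(kx)=f(x)\tau(k^{-1})$: I should confirm it does not obstruct the argument and indeed is what makes $\rho(f)$ well-defined as an operator on $V_\tau$-valued functions (it guarantees that applying $f(y)$ produces a genuine element of $V_\tau$ compatible with the bundle structure, and that $\rho(f)\phi$ again lies in $L^2(\Gamma\backslash G, V_\tau)$). But for the stability claim itself the equivariance is not needed: any $\End(V_\tau)$-valued compactly supported smooth $f$ gives $\rho(f) = \sum_{a,b} R(f_{ab})\otimes E_{ab}$, and each factor preserves $V_\pi$. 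I would close by noting that $m(\pi,\Gamma)<\infty$ ensures $V_\pi$ is finite-dimensional modulo nothing problematic, and that the identification of $V_\pi$ with $\dim V_\tau$ copies of $W_\pi$ is exactly the statement that $V_\pi = (m(\pi,\Gamma)W_\pi)\otimes V_\tau$, which is manifestly $\rho(f)$-stable by the above.
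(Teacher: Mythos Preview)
Your argument is correct and is essentially the same as the paper's, just phrased more structurally: you write $\rho(f)=\sum_{a,b}R(f_{ab})\otimes E_{ab}$ and invoke that each scalar convolution $R(f_{ab})=\int_G f_{ab}(y)\rho(y)\,dy$ preserves closed $G$-invariant subspaces, while the paper computes the inner product $\langle\rho(f)\phi_i,\psi_j\rangle$ directly and reduces to the vanishing of $\int_{\Gamma\backslash G}\phi(gy)\overline{\psi(g)}\,dg$ for $\phi,\psi$ in orthogonal $G$-stable summands---the same fact in coordinates.

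One small correction: in the paper $V_\pi$ denotes a \emph{single} copy of $W_\pi\otimes V_\tau$, not the full isotypic $(m(\pi,\Gamma)W_\pi)\otimes V_\tau$ as you wrote (note the trace formula later reads $\sum_\pi m(\pi,\Gamma)\Tr(\rho(f)|_{V_\pi})$). This does not damage your proof, since your reason---$R(\varphi)$ lies in the image of the group algebra and hence preserves every closed $G$-invariant subspace---applies equally to each individual copy of $W_\pi$, not just to the isotypic block. You should simply replace ``isotypic component'' by ``$G$-invariant summand'' in your key sentence.
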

\begin{proof}
    Let $\mathcal{U} \subset \mathcal{W}$ be the subset such that $\mathcal{U}$ is an orthornormal basis of $W_{\pi}$. Then $\{ \phi_i : \phi \in \mathcal{U}, 1 \leq i \leq n\}$\ is an orthornormal basis of $V_{\pi}$. Thus it suffices to show that 
$$ \langle \rho(f) \phi_i, \psi_j \rangle =0 \quad \text{for any}\ \phi \in \mathcal{U}, \psi \in \mathcal{W} \setminus \mathcal{U} \ \text{and any}\ 1 \leq i, j \leq n. $$

We compute,
\begin{equation}
\begin{split}
\langle \rho(f) \phi_{i} , \psi_{j} \rangle & = \int \limits_{\Gamma \backslash G} \langle \rho(f) \phi_{i}(g), \psi_{j}(g) \rangle\  dg\\
& = \int \limits_{\Gamma \backslash G} \langle \int\limits_G \phi(gy) f(y) v_i, \psi(g) v_j \rangle\ dy\ dg\\
& = \int \limits_{\Gamma \backslash G} \int\limits_G \phi(gy) \overline{\psi(g)} \langle f(y)v_i, v_j \rangle\ dy\ dg\\
& = \int \limits_G \langle f(y)v_i, v_j \rangle \int\limits_{\Gamma \backslash G} \phi(gy)\overline{\psi(g)}\ dg\ dy\\
& = 0.
\end{split}
\end{equation}
\end{proof}

\subsection{Results on Harish-Chandra characters} We will be using the following two important results on \textit{Harish-Chandra characters}: \smallskip

\noindent \textbf{Harish-Chandra character distribution:}  Let $(\pi, W_{\pi})$ be an irreducible unitary representation of $G$. Let $C_c^{\infty}(G)$ be the space of all compactly supported smooth functions on $G$. For any $f \in C_c^{\infty}(G)$, the convolution operator $\pi(f)$ on $W_{\pi}$ is defined by $\pi(f)v=\int \limits_G f(g) \pi(g)v \ dg$ for all $v \in W_{\pi}$. It is a trace class operator [Theorem 10.2, \cite{Kn}] and 
$$ \chi_{\pi}(f)=\Tr(\pi(f)) \  \text{ for all } f \in C_c^{\infty}(G).$$

\begin{thm}\label{thm2.2.1}
\textit{Let $\{\pi_i\}$ be a finite collection of mutually inequivalent unitary irreducible representations of $G$. Then their characters $\{\chi_{\pi_i}\}$ are linearly independent distributions on $G$.}
\end{thm}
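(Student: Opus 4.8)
\textbf{Proof proposal for Theorem \ref{thm2.2.1} (linear independence of Harish-Chandra characters).}

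The plan is to reduce the statement to a standard fact about irreducible admissible representations and their global characters, namely that inequivalent irreducibles have distinct $\mathfrak{Z(g)}$-infinitesimal characters or, failing that, can be separated by choosing suitable test functions adapted to their $K$-types. Concretely, suppose we have a nontrivial linear relation $\sum_{i=1}^{m} c_i \chi_{\pi_i} = 0$ as distributions on $G$, with the $\pi_i$ mutually inequivalent unitary irreducible representations and not all $c_i$ zero. Applying this to an arbitrary $f \in C_c^\infty(G)$ gives $\sum_i c_i \Tr(\pi_i(f)) = 0$. The first step is to group the $\pi_i$ by their infinitesimal characters: if $\pi_i$ and $\pi_j$ have different infinitesimal characters $\chi_i \neq \chi_j$, then there is $z \in \mathfrak{Z(g)}$ with $\chi_i(z) \neq \chi_j(z)$, and replacing $f$ by $z \cdot f$ (the action of $\mathfrak{U(g)}$ on test functions by left-invariant differential operators, under which $\Tr(\pi(z\cdot f)) = \chi_\pi(z)\Tr(\pi(f))$) lets us run a Vandermonde-type argument to isolate, for each fixed infinitesimal character $\chi$, the subrelation $\sum_{i : \chi_{\pi_i} = \chi} c_i \chi_{\pi_i} = 0$. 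So it suffices to treat the case where all $\pi_i$ share one infinitesimal character $\chi$; by [Cor.\ 10.37, \cite{Kn}] this is a finite collection, which is consistent with our hypothesis.

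The second step handles finitely many inequivalent irreducibles $\pi_1,\dots,\pi_r$ with a common infinitesimal character. Here I would invoke the fact that each $\pi_i$ is determined up to equivalence by its underlying $(\mathfrak g, K)$-module, and that a finite set of inequivalent irreducible $(\mathfrak g, K)$-modules can be separated: there exist $K$-types and matrix-coefficient–type test functions $f$ for which the trace pairing matrix $\big(\Tr(\pi_i(f_j))\big)_{i,j}$ is invertible. The cleanest route is to use the linear independence of \emph{characters of the $K$-module restrictions} $\pi_i|_K$ together with a Peter--Weyl / Schur-orthogonality argument: choosing $f$ bi-$K$-finite and supported near $e$, the operator $\pi_i(f)$ acts as a prescribed finite-rank operator on the $K$-isotypic components, and its trace can be made to detect $\pi_i$ while killing the others when the $\pi_j|_K$ are sufficiently different; when several $\pi_j|_K$ agree, one uses the $\mathfrak p$-action (still within a fixed infinitesimal character) to separate them via the Casselman--Jacquet / Harish-Chandra module structure. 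Alternatively, and perhaps more in the spirit of the paper, one appeals directly to a cited result: the global characters $\Theta_{\pi_i}$ are locally $L^1$ functions on $G$ that are real-analytic on the regular set $G'$, and on $G'$ distinct irreducible characters are genuinely distinct analytic functions, whence linear independence as distributions follows because a linear relation would force an identity of analytic functions on each connected component of $G'$, contradicting the fact that the $\Theta_{\pi_i}$ have, e.g., distinct asymptotics along walls or distinct restrictions to a compact Cartan (where available) or are separated by the Weyl-group data in their Harish-Chandra parameters.

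The main obstacle is precisely the case of several inequivalent irreducibles sharing one infinitesimal character and, moreover, having the same restriction to $K$ on the relevant low $K$-types — here neither $\mathfrak{Z(g)}$ nor crude $K$-type counting separates them, and one must genuinely use the representation-theoretic classification (e.g.\ that within a fixed infinitesimal character the irreducibles are indexed by a finite combinatorial set — Langlands parameters or, for the character identities, the structure of standard modules and the Kazhdan--Lusztig data) to produce a test function detecting one and annihilating the rest. I expect to sidestep this by citing the classical theorem of Harish-Chandra on the linear independence of characters of inequivalent irreducible (admissible) representations directly, since it is exactly this statement; the argument above is then the sketch of why such a citation is legitimate. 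The remaining steps — that $\pi(z \cdot f) = \pi(z)\pi(f)$ on trace-class operators, that the Vandermonde elimination is valid, and that real-analyticity on $G'$ upgrades pointwise distinctness to distributional linear independence — are routine.
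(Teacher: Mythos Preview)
The paper does not give a proof of this theorem at all: its ``proof'' is the single line ``Reader is referred to [Theorem 10.6, \cite{Kn}].'' Your proposal, after sketching a two-step strategy (separate by infinitesimal character via a Vandermonde argument, then separate within a $\chi$-packet by $K$-type or analyticity considerations), explicitly concludes that you will ``sidestep this by citing the classical theorem of Harish-Chandra on the linear independence of characters of inequivalent irreducible (admissible) representations directly'' --- which is exactly what the paper does. So the endpoint is the same: both defer to Knapp/Harish-Chandra.

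That said, a brief remark on the sketch you offer as motivation. The first step (Vandermonde on $\mathfrak{Z(g)}$-eigenvalues) is fine. The second step, as you yourself flag, is where the real content lies, and your outline there is not quite a proof: knowing that the $\Theta_{\pi_i}$ are \emph{distinct} analytic functions on $G'$ does not by itself yield \emph{linear independence}; and the $K$-type separation argument can genuinely fail within a $\chi$-packet (e.g.\ members of an $L$-packet may share all low $K$-types). The standard argument in Knapp does not go through analyticity or infinitesimal characters at all --- it uses that the $\pi_i(f)$, as $f$ ranges over bi-$K$-finite test functions, act densely on the $K$-finite vectors (essentially Jacobson/Burnside density for the Hecke algebra action), so one can directly manufacture $f$ with $\Tr(\pi_i(f)) = \delta_{ij}$. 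Since you end by citing the result anyway, none of this affects the correctness of your proposal, but the sketch you give is more heuristic than the actual proof in \cite{Kn}.
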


\begin{proof}
    Reader is referred to [Theorem 10.6, \cite{Kn}].
\end{proof}

\begin{thm}\label{thm2.2.2}
\textit{Let $\pi$ be an irreducible unitary representation of $G$. Then the distribution character $\chi_{\pi}$ is given by a locally integrable function $\phi_{\pi}$ on $G$ i.e. for $f \in C_c^{\infty}(G)$, $$\chi_{\pi}(f)=\int \limits_G f(g) \phi_{\pi}(g) \ dg.$$}

\textit{Moreover, the restriction of $\phi_{\pi}$ to the regular set of $G$ is a real analytic function invariant under conjugation. }
\end{thm}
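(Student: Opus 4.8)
The plan is to reproduce Harish--Chandra's proof of his regularity theorem for characters. \emph{Step 1: reduction to an invariant eigendistribution.} Since $\pi(f)$ is trace class for every $f\in C_c^\infty(G)$ (Theorem~\ref{thm2.2.1}'s companion, [Theorem 10.2, \cite{Kn}]), the character $\chi_\pi$ is a well-defined distribution on $G$; it is invariant under conjugation because $\pi(gxg^{-1})$ and $\pi(x)$ are conjugate operators with equal trace; and since an irreducible unitary representation is admissible and hence admits an infinitesimal character $\lambda$, one has $z\cdot\chi_\pi=\lambda(z)\,\chi_\pi$ for every $z\in\mathfrak{Z(g)}$. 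Thus it suffices to prove the intrinsic statement: every $\mathrm{Ad}$-invariant eigendistribution $\Theta$ on $G$ agrees on the regular set $G'$ with a conjugation-invariant real-analytic function and is locally integrable on all of $G$.

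\emph{Step 2: semisimple descent and induction on $\dim G$.} Fix $x_0\in G$ with Jordan decomposition $x_0=su$, $s$ semisimple, and set $M=Z_G(s)$, $\mathfrak m=\mathrm{Lie}(M)$. Using the exponential map on a transversal slice to the conjugacy class of $s$, Harish--Chandra's descent identifies the germ of $\Theta$ at $x_0$, up to an explicit nonvanishing analytic Jacobian, with the germ at the corresponding point of an $\mathrm{Ad}(M)$-invariant distribution $T$ on $\mathfrak m$ that is an eigendistribution for the algebra of $\mathrm{Ad}(M)$-invariant constant-coefficient differential operators on $\mathfrak m$. Since $\dim M<\dim G$ unless $s$ is central, this furnishes the inductive step: for non-central $s$ the assertion near $x_0$ follows from the theorem applied to $M$, so one is reduced to an invariant eigendistribution on $\mathfrak g$ near $0$, and inside that the nonzero semisimple points are again handled by induction, leaving only the behaviour near $0$ as the genuine base case.

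\emph{Step 3: analyticity on the regular set.} Restrict $T$ to a Cartan subalgebra $\mathfrak h$ and let $\varpi_{\mathfrak h}=\prod_{\alpha>0}\alpha$ be the product of the positive roots. Harish--Chandra's radial-component computation shows that conjugating by $\varpi_{\mathfrak h}$ the radial part along $\mathfrak h$ of each invariant operator $\partial(p)$ produces the constant-coefficient operator $\partial(p|_{\mathfrak h})$; by Chevalley's restriction theorem the elements $p|_{\mathfrak h}$ exhaust $S(\mathfrak h)^W$, and after subtracting eigenvalues they generate an ideal of finite codimension in $S(\mathfrak h)$. Hence $\varpi_{\mathfrak h}\cdot(T|_{\mathfrak h'})$ satisfies a holonomic constant-coefficient system, so on each Weyl chamber it is a finite sum of exponential-polynomials $q(H)e^{\langle\mu,H\rangle}$, in particular real-analytic; transporting this back through the Weyl integration formula shows $\Theta|_{G'}$ is a conjugation-invariant analytic function.

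\emph{Step 4 (the main obstacle): local integrability.} The hard part is controlling how fast the analytic function of Step~3 grows near the singular set. The key points are: (i) although the exponential-polynomial pieces attached to adjacent chambers generally differ, they satisfy Harish--Chandra's \emph{jump relations}, which force $\varpi_{\mathfrak h}\cdot T|_{\mathfrak h}$ to remain locally bounded near the walls, so that locally $|T|_{\mathfrak h}|\le C/|\varpi_{\mathfrak h}|$; and (ii) the Weyl integration formula
\[
\int_{\mathfrak m}\varphi(X)\,dX\;=\;\sum_{\mathfrak h}c_{\mathfrak h}\int_{\mathfrak h}|\varpi_{\mathfrak h}(H)|^2\!\!\int_{M/H}\varphi(\mathrm{Ad}(g)H)\,d\dot g\,dH ,
\]
combined with the $\mathrm{Ad}(M)$-invariance of $T$, converts the non-integrable singularity $1/|\varpi_{\mathfrak h}|$ into the locally bounded factor $|\varpi_{\mathfrak h}|$, giving local integrability near $0$; all remaining points are covered by the induction of Step~2. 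This delicate singularity analysis, rather than the soft functional analysis of Step~1, is the heart of the matter. (Alternatively one may package $\Theta$ as the character of a regular holonomic $\mathcal D$-module and invoke general local-$L^1$ bounds for such modules; the jump relations and the Weyl integration formula are, however, precisely what make either route work.)
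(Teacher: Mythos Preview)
The paper does not prove this theorem at all: its ``proof'' consists of a single line referring the reader to [Theorem~10.25, \cite{Kn}], since Harish--Chandra's regularity theorem is used here only as a black-box input to the main argument in Section~\ref{sec4}.

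Your proposal instead sketches the actual Harish--Chandra proof, and the outline is broadly faithful: the reduction to an $\mathrm{Ad}$-invariant $\mathfrak{Z(g)}$-eigendistribution, the descent to the centralizer of a semisimple element together with induction on dimension, the radial-component calculation on Cartan subalgebras yielding exponential-polynomial solutions, and the boundedness of $\varpi_{\mathfrak h}\cdot T|_{\mathfrak h}$ across walls combined with the Weyl integration formula to obtain local integrability---this is indeed the architecture of the argument as presented in, e.g., Knapp or Varadarajan. What you have written is an honest high-level roadmap rather than a proof: each of Steps~2--4 hides substantial technical work (the precise construction of the descent map and the matching of invariant differential operators under it, the verification that the jump relations really force local boundedness, the passage between the group and the Lie algebra), and a complete account occupies dozens of pages in the standard references. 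For the purposes of this paper, which only \emph{uses} the theorem, the citation is the appropriate choice; your sketch would be suitable as motivation in an expository setting but would need considerable expansion to stand as a proof.
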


\begin{proof}
    Reader is referred to [Theorem 10.25, \cite{Kn}].
\end{proof}
\smallskip

\subsection{Isospectrality and representation equivalence}\label{iso-rep}

 There is a natural homogeneous vector bundle $E_{\tau}$ on $G/K$ that is associated with the representation $(\tau , V_{\tau})$ of $K$. The space of all smooth global sections of $E_{\tau}$ can be realised as $\mathcal A^\infty(G/K, \tau) =\{ \phi : G \rightarrow V_{\tau}\ |\ \phi$ is smooth, $\phi(xk)=\tau(k^{-1})(\phi(x)) \text{ for all } x \in G, k \in K\}$. Note that $\mathcal A^\infty(G/K, \tau)$ is a $\mathfrak{U(g)}^K$ module. In particular, $\mathfrak{Z(g)} \subset \mathfrak{U(g)}^K$ acts on $\mathcal A^\infty(G/K, \tau)$.\smallskip

 Let $\mathfrak{g}= \mathfrak{k} \oplus \mathfrak{p}$ be the Cartan decomposition of the Lie algebra $\mathfrak{g}$. We choose a basis $\{X_i\}$ and $\{Y_j\}$ of $\mathfrak{k}$ and $\mathfrak{p}$, respectively with respect to a bilinear form $B$ on $\mathfrak{g}$ induced from the killing form such that $B(X_k, X_l)=-\delta_{kl}$ and $B(Y_m, Y_n)=\delta_{mn}$. Let $C$ be the Casimir element given by $C=-\sum X_i^2 + \sum Y_j^2$. The Casimir element $C$ induces a second order symmetric elliptic operator $\Delta_{\tau}$ on $\mathcal A^\infty(G/K, \tau)$.\smallskip

 Let $\Gamma$ be a uniform torsion-free lattice in $G$, and let 
 $ X_{\Gamma}:= \Gamma \backslash G / K$ be the associated compact locally symmetric space of non-compact type. We consider the vector bundle $E_{\tau, \Gamma}$ on $X_{\Gamma}$ defined by the relation $[\gamma g , w] \sim [g, w]$ for all $\gamma \in \Gamma \text{ and } [g, w] \in E_{\tau}$. The space of all smooth global sections of $E_{\tau, \Gamma}$ can be realised as the space 
 $V_{\Gamma, \tau}=\{ \phi \in \mathcal A^\infty(G/K, \tau) \ | \ \phi(\gamma x)=\phi(x) \text{ for all } \gamma \in \Gamma\}$.
 Hence, the centre $\mathfrak{Z(g)}$ acts on $V_{\Gamma, \tau}$. \smallskip

 Let $\Delta_{\tau, \Gamma}:=\Delta_{\tau}|_{V_{\Gamma, \tau}}.$ This is again a second order symmetric elliptic operator on $X_{\Gamma}$. Its spectrum Spec$(\Delta_{\tau, \Gamma})$ is a discrete subset of the non-negative real numbers. 
 Let $\text{Mult}_{\Delta_{\tau, \Gamma}}(\lambda):=$ {\it the multiplicity of} $\lambda \in$ Spec$(\Delta_{\tau, \Gamma})$. Recall Eq.~\eqref{1.1.1} 
 $$\text{Mult}_{\Delta_{\tau, \Gamma}}(\lambda)  = \sum_{\pi \in \widehat{G}, \pi(C)=\lambda} m(\pi, \Gamma)\ {\rm dim}(\Hom_K(\tau^\vee, \pi|_K)),$$
where $\pi(C)$ is the scalar by which Casimir element $C$ acts on $W_{\pi}$.

\begin{defn}
$ X_{\Gamma_1}$ and $X_{\Gamma_2}$ are called \textit{$\tau$- isospectral} if the operators $\Delta_{\tau, \Gamma_1}$ and $\Delta_{\tau, \Gamma_2}$ have same spectrum (with multiplicity). 
\end{defn}

We denote $\widehat{G}_{\tau}=\{ \pi \in \widehat{G} : \Hom_K(\tau, \pi) \neq 0\}$.

\begin{defn}
$\Gamma_1$ and $\Gamma_2$ are called \textit{$\tau$-representation equivalent} if $m(\pi, \Gamma_1)=m(\pi, \Gamma_2)$ for all $\pi \in \widehat{G}_{\tau}$.
\end{defn}

It is clear from Eq.(~\ref{1.1.1}) if $\Gamma_1$ and $\Gamma_2$ are $\tau^\vee$-representation equivalent, then $X_{\Gamma_1}$ and $X_{\Gamma_2}$ are $\tau$-isospectral.
\smallskip
\smallskip

\subsection{A further refinement}\label{refinement}
 We have seen that $\mathfrak{Z}(\mathfrak{g})$ acts on $V_{\Gamma, \tau}$ (see \ref{center-action}). For any character $\chi \in \widehat{\mathfrak{Z}(\mathfrak{g})}$, we have the $\chi$-eigenspace defined as
 $$V_{\chi, \Gamma, \tau}:=\{ \phi \in V_{\Gamma, \tau}\ | \ z. \phi =\chi(z) \phi \text{ for all }  z \in \mathfrak{Z}(\mathfrak{g})\}.$$

In fact, $V_{\Gamma, \tau}$ decomposes as  $V_{\Gamma, \tau}=\bigoplus \limits_{\chi \in \widehat{\mathfrak{Z(g)}}} V_{\chi, \Gamma, \tau}$.

\vspace{8pt}

We here introduce a notion of two locally symmetric spaces
being {\it infinitesimally $\tau$-isospectral}, defined as follows:  

\begin{defn} Two locally symmetric spaces $X_{\Gamma_1}$ and $X_{\Gamma_2}$ are infinitesimally $\tau$-isospectral if for all characters $\chi \in \widehat{\mathfrak{Z(g)}}$,  
\[ \rm{dim}\ V_{\chi, \Gamma_1, \tau}=\rm{dim}\ V_{\chi, \Gamma_2, \tau}.\]
\end{defn}

\begin{rem}
If $G$ is a real rank one Lie group, the centre $\mathfrak Z(\mathfrak g)$ is the polynomial algebra over $\C$ in the Casimir element $C$. Therefore, the $\chi$-eigenspace is simply the $\lambda$-eigenspace where $\lambda$ is an eigenvalue of the Laplace-Beltrami operator $\Delta_{\tau, \Gamma}$ on $E_{\tau, \Gamma}$. In this case, infinitesimal $\tau$-isospectrality reduces to $\tau$-isospectrality as defined above.

\end{rem}
As in \cite{MR2836013} and \cite{Bha-Sa}, we can expect that the $\text{dim} \ V_{\chi, \Gamma, \tau}$ is related with the multiplicities $m(\pi, \Gamma)$ of $\pi \in \widehat{G}_{\tau^\vee}$ occurring in $L^2(\Gamma \backslash G)$ with infinitesimal character $\chi$. This is the content of Thm.~\ref{thm2}.

\section{Selberg trace formula for $L^2(\Gamma \backslash G, V_{\tau})$}\label{sec3}

Recall $\{\psi_{j} : \psi \in \mathcal{W}, j\in {1,...,n}\}$ is an orthonormal basis of $L^2(\Gamma \backslash G, V_{\tau})$ (see \ref{2.1.1}). For any $f \in C_c^{\infty}(G, K, V_{\tau})$ (see \ref{2.1.3}), let $[f_{ij}]_{n \times n}$ be the matrix representation of $f$ with respect to the basis $\{v_i\}_{i=1}^n$ of $V_{\tau}$. For $f \in C_c^{\infty}(G, K, V_{\tau})$, observe that

\begin{equation}
\begin{split}
\rho(f) \psi_j(x) & = \int \limits_G f (x^{-1} y ) (\psi_{j}(y))\ dy \\
 & = \int \limits_G f ( x ^{-1} y ) ( \psi(y) v _j)\ dy \\
 & = \sum_{i} \int \limits_G \psi (y)f _{ij}( x ^{-1} y) v_i\ dy.
\end{split}
\end{equation}


Then, we have 
\begin{equation}
\begin{split}
\Tr (\rho(f)) & = \sum_{\psi \in \mathcal{W}} \sum_{j} \langle \rho(f) \psi_{j}, \psi_{j}\rangle \\
 & = \sum_{\psi \in \mathcal{W}} \sum_{j} \int \limits_{\Gamma \backslash G} \langle \sum_{i} \int \limits_G \psi(y) f_{ij}(x ^{-1} y) v_i\ dy , \psi(x) v_j \rangle\ dx \\
 & = \sum_{\psi \in \mathcal{W}} \ \int \limits_{\Gamma \backslash G \times \Gamma \backslash G} \sum_{\gamma \in \Gamma} \Tr (f( x^{-1} \gamma y)) \overline{\psi(x)} \psi(y) \ dy\ dx \\
 & = \sum_{\psi \in \mathcal{W}} \ \int \limits_{\Gamma \backslash G \times \Gamma \backslash G} \sum_{\gamma \in \Gamma} h_f( x^{-1} \gamma y) \overline{\psi(x)} \psi(y)\ dy \ dx.
 \end{split}
\end{equation}

Put $K_f(x,y): =\sum\limits_{\gamma \in \Gamma} h_f(x^{-1}\gamma y)$.   
Above equals,
\begin{equation}
\begin{split}
\sum_{\psi \in \mathcal{W}} \ \int\limits_{\Gamma \backslash G \times \Gamma \backslash G} K_f(x,y) \overline{\psi (x)} \psi(y) \ dy \ dx. \\
\end{split}
\end{equation}

Recall that if $T$ is an integral operator on $L^2 (\Gamma \backslash G)$ defined by 
\begin{equation}
\begin{split}
T \psi(x) & = \int\limits_{\Gamma \backslash G} K_f(x,y) \psi(y)\ dy. 
\end{split}
\end{equation}

 Then $T$ is a trace class operator and its trace is given by
\begin{equation}
\begin{split}
\Tr(T) =
 \sum_{\psi \in \mathcal{W}} \ \int\limits_{\Gamma \backslash G \times \Gamma \backslash G} K_f(x, y) \overline{\psi(x)} \psi(y)\ dy\ dx. \\
\end{split}
\end{equation}

On the other hand the trace of such an operator $T$ equals $\int \limits_{\Gamma \backslash G} K_f(x,x)\ dx$.
Therefore, $$\Tr(\rho(f))= \int\limits_{\Gamma \backslash G} K_f(x,x)\ dx=\int\limits_{\Gamma \backslash G} \sum\limits_{\gamma \in \Gamma } h_f( x^{-1} \gamma x)\ dx.$$

From Prop.~\ref{prop2.1.1}, we have
$$\Tr(\rho(f))=\sum\limits_{\pi \in \widehat{G}} m(\pi , \Gamma)\ \Tr(\rho(f)|_{V_{\pi}}).
$$

Now,
\begin{equation}
\begin{split}
\Tr(\rho(f)|_{V_{\pi}}) & = \sum_{j} \sum_{\psi \in \mathcal{W} \cap W_{\pi}} \langle \rho(f) \psi_j , \psi_j \rangle \\
& = \sum_{\psi \in \mathcal{W} \cap W_{\pi}} \ \int\limits_{\Gamma \backslash G} \int\limits_G \psi(xy) \ \overline{\psi(x)}\ \Tr(f(y))\ dy\ dx\\
& = \Tr(\pi(h_f)).
\end{split}
\end{equation}
\smallskip

Therefore, we have the following \textit{Selberg Trace formula}:

 \begin{equation}\label{3.1.8}
 \begin{split}
 \sum_{\pi \in \widehat{G}} m( \pi, \Gamma)\  \Tr( \pi(h_f)) & =\int\limits_{\Gamma \backslash G} \sum_{\gamma \in \Gamma} h_f(x^{-1} \gamma x)\ dx \\
 & = \sum_{[\gamma] \in [\Gamma]_G} \text{vol}\ (\Gamma_{\gamma} \backslash G_{\gamma}) \int\limits_{G_{\gamma} \backslash G} h_f(x^{-1} \gamma x)\ dx.
 \end{split}
 \end{equation}

\noindent We denote $\int\limits_{G_{\gamma} \backslash G} h_f(x^{-1} \gamma x)\ dx\ \text{by}\ O_\gamma(h_f)$.

\begin{rem}
    This trace formula is a generalisation of the well-known  Selberg trace formula. For more explicit description about the `geometric side', see \cite{Wal}.
\end{rem}

\section{Proof of the first main result Thm.~\ref{thm1.3.1}}\label{sec4}

\subsection{Some preliminary results} We will describe the lemmas and propositions required to prove Thm.~\ref{thm1.3.1}.

\begin{lem}\label{lem4.1.1}
\textit{Let $Kx$ be a left coset of $K$ in $G$ for some $x \in G$. Let $U$ be an open set containing the left coset $Kx$. Then there exists $f \in C_c^{\infty}(G)$ with Supp\ $(f)\subset U$ such that $\int\limits_K f(k x) \chi_{\tau}(k) \ dk \neq 0$. (Here $\chi_{\tau}$ is the character function of $\tau$.)}
\end{lem}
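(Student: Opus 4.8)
The plan is to manufacture $f$ whose restriction to the coset $Kx$ reproduces $\overline{\chi_\tau}$ under the diffeomorphism $k\mapsto kx$, so that the integral in question collapses to $\int_K f(kx)\chi_\tau(k)\,dk=\int_K|\chi_\tau(k)|^2\,dk=\|\chi_\tau\|_{L^2(K)}^2$, which is strictly positive because $\chi_\tau(e)=\dim V_\tau\ge 1$ forces $\chi_\tau\not\equiv 0$. (If one prefers genuinely real-valued test functions, replace $\overline{\chi_\tau}$ by a nonvanishing real or imaginary part of $\chi_\tau$ and argue with $\int f^2>0$; nothing essential changes. Note one must use $\overline{\chi_\tau}$ rather than $\chi_\tau$ itself, since $\int_K\chi_\tau(k)^2\,dk$ can vanish.)

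To carry this out I would exploit the quotient map $q\colon G\to K\backslash G$. Since $K$ is compact, $q$ is proper and closed, and it is a locally trivial principal $K$-bundle. Because $G\setminus U$ is closed, $q(G\setminus U)$ is closed in $K\backslash G$ and does not contain $[x]=Kx$ (any $g$ with $q(g)=[x]$ lies in $Kx\subset U$); hence $W_0:=K\backslash G\setminus q(G\setminus U)$ is an open neighbourhood of $[x]$ with $q^{-1}(W_0)\subset U$. Shrinking, I would pick an open neighbourhood $W$ of $[x]$ with $W\subset W_0$ that lies inside a trivializing chart of $q$; from a local section (suitably normalized at $x$) one gets a smooth map $\kappa\colon q^{-1}(W)\to K$ satisfying $\kappa(hg)=h\,\kappa(g)$ for $h\in K$ and $\kappa(x)=e$, so that $\kappa(kx)=k$ for every $k\in K$. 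Equivalently, one may use a $K$-equivariant tubular neighbourhood of the compact embedded submanifold $Kx$ together with its equivariant retraction onto $Kx$.

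Now choose $\beta\in C_c^\infty(K\backslash G)$ with $0\le\beta\le 1$, $\beta([x])=1$, and $\mathrm{Supp}\,\beta\subset W$, and define
\[
f(g)=\beta(q(g))\,\overline{\chi_\tau(\kappa(g))}\ \text{ for } g\in q^{-1}(W),\qquad f(g)=0\ \text{ otherwise}.
\]
Then $f$ is smooth (it vanishes identically near the topological boundary of $q^{-1}(W)$), and $\mathrm{Supp}\,f\subset q^{-1}(\mathrm{Supp}\,\beta)$, which is compact since $q$ is proper and is contained in $q^{-1}(W_0)\subset U$; thus $f\in C_c^\infty(G)$ with $\mathrm{Supp}\,f\subset U$. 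Finally, for $k\in K$ we have $q(kx)=[x]$ and $\kappa(kx)=k$, so $f(kx)=\beta([x])\,\overline{\chi_\tau(k)}=\overline{\chi_\tau(k)}$, whence $\int_K f(kx)\chi_\tau(k)\,dk=\int_K|\chi_\tau(k)|^2\,dk>0$.

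The only real content is the local-triviality (or equivariant tubular neighbourhood) step producing the equivariant $K$-valued coordinate $\kappa$, together with the elementary bookkeeping that keeps $\mathrm{Supp}\,f$ compact and inside $U$; I do not expect a genuine obstacle here. After that, smoothness, the support condition, and the one-line evaluation of the integral are all routine.
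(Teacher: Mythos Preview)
Your proof is correct. The underlying idea is the same as the paper's: the restriction map $f\mapsto(k\mapsto f(kx))$ from $\{f\in C_c^\infty(G):\mathrm{Supp}\,f\subset U\}$ to $C^\infty(K)$ is surjective, so the integral can certainly be made nonzero. The paper argues by contradiction, asserting in one line that any $\psi\in C^\infty(K)$ arises as such a restriction (``define $f(kx)=\psi(k)$ and extend $f$ smoothly to $U$'') and then noting that $\int_K\psi(k)\chi_\tau(k)\,dk=0$ for all $\psi$ would force $\chi_\tau\equiv 0$. Your version is constructive: you choose the specific $\psi=\overline{\chi_\tau}$ and evaluate the integral as $\|\chi_\tau\|_{L^2(K)}^2>0$. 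More to the point, you actually carry out the extension step that the paper leaves implicit, using the principal-bundle structure of $q\colon G\to K\backslash G$ and a local section to build the equivariant coordinate $\kappa$; this is precisely what is needed to justify the paper's ``extend smoothly,'' and your treatment is the more rigorous of the two.
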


\begin{proof}
For any $f \in C_c^{\infty}(G)$ with ${\rm Supp}(f) \subset U$, we define $\psi \in C^{\infty}(K)$ by $\psi(k):=f(kx)$. Conversely, for any $\psi \in C^{\infty}(K)$, define $f(kx):=\psi(k)$, and extend $f$ smoothly to $U$.\smallskip

Now, if $\int\limits_K f(kx) \chi_{\tau}(k)\ dk =0$ for all $f \in C_c^{\infty}(G)$ such that ${\rm Supp}(f) \subset U$, then 
\[
\int\limits_K \psi(k)\ \chi_{\tau}(k)\ dk =0\  \text{for all}\ \psi \in C^{\infty}(K).\] 
But that implies $\chi_{\tau}$ is identically zero leading to a contradiction.
\end{proof}

\begin{lem}\label{lem4.1.2}
 \textit{Let $Kx \neq Ky$ be two distinct left cosets of $K$ in $G$. Then there exists $F \in C_c^{\infty}(G, K, V_{\tau})$ such that $h_{F}(x)\neq 0$ and $h_{F}|_{Ky}=0$.}
\end{lem}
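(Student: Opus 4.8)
The plan is to deduce Lemma~\ref{lem4.1.2} from Lemma~\ref{lem4.1.1} by first separating the two cosets inside $G$ and then averaging a suitable scalar test function over $K$ against $\tau$, so as to manufacture an element of $C_c^\infty(G,K,V_\tau)$ with the required properties.

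First I would note that, since $Kx \neq Ky$ are distinct left cosets of the subgroup $K$, they are disjoint; moreover each is the continuous image of the compact group $K$, hence compact, and therefore closed in the (metrizable) Lie group $G$. Consequently there is an open set $U$ with $Kx \subseteq U$ and $U \cap Ky = \emptyset$. Applying Lemma~\ref{lem4.1.1} to this open neighbourhood $U$ of $Kx$ then produces $f \in C_c^\infty(G)$ with $\mathrm{Supp}(f) \subseteq U$ and $\int_K f(kx)\,\chi_\tau(k)\,dk \neq 0$.

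Next I would set $F(g) := \int_K f(kg)\,\tau(k)\,dk$, an $\End(V_\tau)$-valued function on $G$, and check that $F \in C_c^\infty(G,K,V_\tau)$: it is smooth, its support is contained in the compact set $K\cdot \mathrm{Supp}(f)$, and for $k_0 \in K$ the change of variables $k \mapsto kk_0^{-1}$ together with the invariance of Haar measure on $K$ gives $F(k_0 g) = F(g)\,\tau(k_0^{-1})$, which is exactly the equivariance defining $C_c^\infty(G,K,V_\tau)$. Taking traces, $h_F(g) = \Tr(F(g)) = \int_K f(kg)\,\chi_\tau(k)\,dk$. Hence $h_F(x) \neq 0$ by the choice of $f$, while for every $k_0, k \in K$ the point $kk_0 y$ lies in the coset $Ky$, which is disjoint from $\mathrm{Supp}(f) \subseteq U$; therefore $f(kk_0 y) = 0$ for all $k$, so $h_F(k_0 y) = 0$, i.e.\ $h_F|_{Ky} \equiv 0$, as required.

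I do not expect a genuine obstacle here; this is the standard ``$K$-averaging'' construction. The two points that need care are the side on which $\tau$ is inserted in the averaging integral (so that $F$ acquires the correct \emph{left} $\tau$-equivariance rather than a right one), and the fact that at the first step one must separate the cosets $Kx$ and $Ky$ themselves — not merely the points $x$ and $y$ — since it is the disjointness of $\mathrm{Supp}(f)$ from the whole coset $Ky$ that forces $h_F$ to vanish identically on $Ky$.
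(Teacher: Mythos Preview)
Your proof is correct and follows essentially the same route as the paper: separate the compact cosets by an open set, invoke Lemma~\ref{lem4.1.1} to get a scalar $f$, average against $\tau$ to obtain $F(g)=\int_K f(kg)\,\tau(k)\,dk$, and read off $h_F$ as the integral against $\chi_\tau$. You are in fact slightly more explicit than the paper in verifying the left $K$-equivariance of $F$ and in checking that $h_F$ vanishes on the whole coset $Ky$ rather than just at $y$.
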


\begin{proof}
    $Kx \neq Ky$ are disjoint compact sets in $G$. Choose disjoint open sets $U_1, U_2$ with $Kx \subset U_1,Ky \subset U_2$, and $U_1 \cap U_2 = \emptyset$. From the above Lem.~\ref{lem4.1.1}, there exists $f \in C_c^{\infty}(G)$ with ${\rm Supp}\ (f) \subset U_1$ such that $\int\limits_K f(kx) \chi_{\tau}(k)\ dk \neq 0$.

Define, $F(g) : = \int\limits_K f(kg) \tau(k)\ dk$. Then, $F \in C_c^{\infty}(G,K, V_{\tau})$. 

Furthermore, $h_{F}(g)=\int\limits_K f(kg) \chi_{\tau}(k)\ dk$. Hence $h_F(x) \neq 0$ and $h_F(y)=0$ and hence $h_{F}|_{Ky}=0$.
\end{proof}

\begin{prop}\label{prop4.1.3}
\textit{Let $\pi$ be an irreducible representation of $G$ occurring as a subrepresentation of $\rho$. Assume that $\pi$ is not $\tau$-spherical i.e. $\Hom_K(V_{\tau}, V_{\pi})=0$. Then $\rho(f)$ is zero on $V_{\pi}$.}
\end{prop}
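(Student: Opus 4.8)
The plan is to show that for $f \in C_c^{\infty}(G, K, V_{\tau})$ the convolution operator $\rho(f)$ factors through a $K$-isotypic projection that kills $V_{\pi}$ whenever $\tau$ does not occur in $\pi|_K$. First I would recall from the Selberg trace formula computation in Section \ref{sec3} that the action of $\rho(f)$ on the copy $V_{\pi} \subset L^2(\Gamma \backslash G, V_{\tau})$ is governed, via the identity $\Tr(\rho(f)|_{V_{\pi}}) = \Tr(\pi(h_f))$, by the scalar function $h_f(x) = \Tr(f(x))$; but more is true at the operator level. The key structural observation is the $\tau$-equivariance $f(kx) = f(x)\tau(k^{-1})$ together with the fact that $V_{\pi}$ is built from vectors $\psi_i(x) = \psi(x) v_i$ with $\psi$ ranging over an orthonormal basis of the single $G$-isotypic block $W_\pi$. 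I would unwind $\rho(f)\phi(x) = \int_G f(y)\phi(xy)\, dy$ for $\phi \in V_{\pi}$ and show the output again lies in $V_{\pi}$ (this is Prop.~\ref{prop2.1.1}), and then isolate the dependence of the value of $\rho(f)\phi$ on the $K$-type: substituting $y \mapsto ky$ and using the equivariance of $f$ converts an average over $G$ into an average that, on the $W_\pi$-factor, sees only the projection of $\pi|_K$ onto the $\tau$-isotypic component.

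The crux is the following: because $f(kx) = f(x)\tau(k^{-1})$, for any $v \in V_\tau$ the map $k \mapsto f(ky)v$ transforms like $\tau$ on the left, so integrating $f(y)\phi(xy)$ over $G$ is the same as integrating over $G$ a kernel that has been pre-composed with the orthogonal projection $E_\tau^\pi : W_\pi \to W_\pi$ onto the $\tau$-isotypic subspace of $W_\pi|_K$. Concretely, I would write, for $\phi\in V_\pi$ coming from $\psi\in W_\pi$,
\[
\rho(f)\phi(x) = \int_G f(y)\,\phi(xy)\,dy = \int_G \Big(\int_K \tau(k)\, f(ky)\, dk\Big)\phi(xy)\,dy
\]
after a harmless averaging (legitimate since the $V_\pi$-component only depends on such an average, or by directly exploiting $f(ky)=f(y)\tau(k^{-1})$ to insert the projector), and then recognize $\int_K \tau(k)\cdot(\,\cdot\,)(ky)\,dk$ as extracting the $\tau$-isotypic part of the $K$-representation on $W_\pi$ realized through $\psi$. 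When $\Hom_K(V_\tau, V_\pi) = 0$, this $\tau$-isotypic part is zero, so the whole integrand vanishes and $\rho(f)\phi = 0$. An alternative, cleaner route avoiding delicate Fubini manipulations is to argue by traces on twisted pieces: since $\rho(f)$ preserves $V_\pi$, it suffices to show $\rho(f)|_{V_\pi}=0$, and for that one can show $\langle \rho(f)\phi_i,\psi_j\rangle = 0$ for all basis vectors of $V_\pi$ by the same substitution $y\mapsto ky$, pulling the $K$-integral onto $\psi$ and invoking that $\int_K \langle \pi(k)\psi,\psi'\rangle\,\overline{\langle\tau(k)v_i,v_j\rangle}\,dk$ vanishes by Schur orthogonality when $\tau\not\hookrightarrow\pi|_K$.

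The main obstacle I anticipate is purely bookkeeping: making the interchange of the $G$-integral, the $K$-average, and the $\Gamma\backslash G$-integral rigorous, and keeping straight the several left/right actions (left $K$-equivariance of $f$, right regular action $\rho$, left $\Gamma$-invariance of $\phi$, and the matrix coefficients of $\tau$ versus those of $\pi$). Once the $K$-integral is correctly positioned so that Schur orthogonality on $K$ applies — i.e., one genuinely has $\int_K \langle\pi(k)\xi,\eta\rangle\,\overline{\chi_\tau(k)}\,dk$ or the matrix-coefficient analogue acting inside — the conclusion is immediate from $\Hom_K(V_\tau,V_\pi)=0$. I would therefore spend the bulk of the write-up carefully setting up the averaged form of $\rho(f)$ and citing Prop.~\ref{prop2.1.1} to restrict attention to a single $V_\pi$, and then close with the Schur-orthogonality vanishing.
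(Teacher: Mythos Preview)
Your second route---showing $\langle \rho(f)\phi_i,\psi_j\rangle=0$ via the substitution $y\mapsto ky$, averaging over $K$, and invoking Schur orthogonality between matrix coefficients of $\tau$ and of $\pi|_K$---is correct and will close the argument. One caution: the first displayed equation
\[
\rho(f)\phi(x)=\int_G\Big(\int_K \tau(k)\,f(ky)\,dk\Big)\phi(xy)\,dy
\]
is not literally true, since $\int_K\tau(k)f(ky)\,dk=\int_K\tau(k)f(y)\tau(k^{-1})\,dk$ need not equal $f(y)$; your parenthetical hedge (``legitimate since\ldots'') does not repair this. Drop that line and go straight to the matrix-coefficient computation, which is sound.

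The paper's proof takes a different and somewhat cleaner route. Rather than computing inner products and appealing to Schur orthogonality on $K$, it equips the finite-dimensional subspace $U_\psi=\mathrm{span}\{\psi_j:1\le j\le n\}\subset V_\pi$ (for fixed $\psi$) with the $K$-action coming from $\tau$ on the $V_\tau$-slot, i.e.\ $k\cdot\psi_v:=\psi_{\tau(k)v}$, and then checks in one line, using only $f(k_0^{-1}y)=f(y)\tau(k_0)$, that $\rho(k_0)\circ\rho(f)=\rho(f)\circ\tau(k_0)$ on $U_\psi$. Thus $\rho(f)|_{U_\psi}\in\Hom_K(V_\tau,V_\pi)$, and the hypothesis kills it directly. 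What this buys over your approach is that no orthogonality relations, no decomposition of $\tau$ into irreducibles, and no care about infinite-dimensional $K$-types in $W_\pi$ are needed: the vanishing is purely representation-theoretic. Your approach, by contrast, is more hands-on and makes the mechanism (projection onto a missing isotypic component) visually explicit, at the cost of more bookkeeping with Fubini and Peter--Weyl.
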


\begin{proof}
    It is enough to show that $\rho(f)$ is zero at each element of the orthornormal basis $\{\psi_j : \psi \in \mathcal{U}, 1 \leq j \leq n\}$. Here, $\mathcal{U}=\mathcal{W} \cap W_{\pi}$. Recall that (see Eq.~\eqref{2.1.4}) 
    $$\rho(f)\psi_j(g)= \int\limits_G \psi(gy) f(y)(v_j) \ dy.$$
  
  For a fixed $\psi \in \mathcal{U}$, consider the subspace $U_{\psi}:=\text{span}\ \{\psi_j  : j \in \{1,\ldots, n\}\} $. If $v = \sum\limits_{j=1}^{n} a_j v_j$, we write $\psi_v:=\sum_{j=1}^{n} a_j \psi_j$. Then the subspace $U_{\psi}$ is same as $\{ \psi_v : v \in V_{\tau}\}$. There is an action of $K$ on $U_{\psi}$ by $\tau(k)\psi_v=\psi_{\tau(k)v}.$\
  
  Clearly, $U_{\psi}$ is a representation of $K$ isomorphic to $\tau$. Note that $\widehat{\bigoplus\limits_{\psi \in \mathcal{W}}} U_{\psi} = V_{\pi}$. We show that $\rho(f)|_{U_{\psi}}=0$. In fact, we show that $\rho(f) \in \Hom_K(U_{\psi}, V_{\pi})$. 
  
  Let $\psi_v \in U_{\psi}$ and $k_0 \in K$. Then for every $g \in G$, we have
  
\begin{equation}
\begin{split} 
(\rho(k_0) \rho(f) \psi_v )(g) 
& = (\rho(f) \psi_v)(gk_0)\\
& = \int\limits_G f(y)\ (\psi(gk_0y)v)\ dy\\
& = \int\limits_G \psi(gy)\ f(k_0^{-1} y) v\ dy\\
& = \int\limits_G \psi(gy)\ f(y) \tau(k_0)v\ dy\\
& = \rho(f)\ \tau(k_0)\ \psi_v(g).
\end{split}
\end{equation}

Therefore, $\rho(f)$ is zero on $U_{\psi}$ for all $\psi \in \mathcal{W}$. Consequently, $\rho(f)$ is zero on $V_{\pi}$. 

\end{proof}

\begin{lem}\label{lem4.1.4}
 \textit{Let $\Gamma$ be a torsion-free uniform lattice in $G$. For a non-trivial element $\gamma \in \Gamma$, the conjugacy class $[\gamma]_G$ of $\gamma$ in $G$ is disjoint from $K$.}
\end{lem}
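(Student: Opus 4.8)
The strategy is a short proof by contradiction. Suppose, on the contrary, that $[\gamma]_G \cap K \neq \emptyset$. Then there exist $g \in G$ and $k \in K$ with $g\gamma g^{-1} = k$, so that $\gamma = g^{-1}kg \in g^{-1}Kg$. The conjugate $g^{-1}Kg$ is again a compact subgroup of $G$, and it contains every power $\gamma^n$, $n \in \Z$. Hence the cyclic subgroup $\langle \gamma \rangle$ is relatively compact in $G$.

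On the other hand, $\langle \gamma \rangle \subseteq \Gamma$, and $\Gamma$ is discrete in $G$ (being a lattice); therefore $\langle \gamma \rangle$, with the subspace topology inherited from $G$, is discrete. The main point is to combine these two observations: the closure $\overline{\langle \gamma \rangle}$ is compact, and $\langle \gamma \rangle$, being discrete, is a closed subset of it, so $\langle \gamma \rangle = \overline{\langle \gamma \rangle}$ is a compact discrete space, hence finite. Thus $\gamma$ has finite order. Since $\Gamma$ is torsion-free, this forces $\gamma = e$, contradicting the hypothesis that $\gamma$ is non-trivial. Therefore $[\gamma]_G$ is disjoint from $K$.

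I expect no serious obstacle here; the only step requiring a little care is the elementary topological fact that a discrete subgroup contained in a relatively compact set must be finite. Note also that uniformity of $\Gamma$ plays no role — only discreteness and torsion-freeness, together with compactness of $K$; equivalently, one may phrase the argument as the statement that an element of a torsion-free discrete subgroup cannot be conjugate into a maximal compact subgroup, since such an element would be elliptic and hence of finite order.
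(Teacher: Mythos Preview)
Your argument is correct and is the standard proof of this fact. The paper itself does not supply a proof but simply refers the reader to \cite{MR2836013}; your write-up is therefore more self-contained than what appears in the paper. The one step you might tighten is the passage from ``discrete'' to ``closed'': rather than arguing via the closure of $\langle\gamma\rangle$, it is cleaner to note that $\Gamma$, being a lattice, is a closed subgroup of $G$, so $\Gamma\cap g^{-1}Kg$ is a closed subset of the compact set $g^{-1}Kg$, hence compact, and discrete, hence finite. You are also right that uniformity of $\Gamma$ is irrelevant here.
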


\begin{proof}
    Reader is referred to \cite{MR2836013}.
\end{proof}

\begin{lem}\label{lem4.1.5} 
\textit{If $\gamma \neq e$, then $e \notin K [\gamma]_G$.}
\end{lem}

\begin{proof}
    If there is $k \in K$ and $ x \in G$ such that $k x^{-1} \gamma x =e$, then $x^{-1} \gamma x \in K$; which contradicts the previous lemma.
    
\end{proof}

\begin{prop}\label{prop4.1.6}
\textit{There exists an open set $B$ in $G$ such that $[\gamma]_G \cap B$ is empty for all $\gamma \in \Gamma_1 \cup \Gamma_2$, and $B$ is stable under left $K$ action on $G$.}
\end{prop}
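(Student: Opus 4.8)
The plan is to realise $B$ as the left $K$-saturation $B=KV:=\bigcup_{k\in K}kV$ of a sufficiently small nonempty open set $V$ that is disjoint from every set $K[\gamma]_G$ with $\gamma\in\Gamma_1\cup\Gamma_2$. Granting such a $V$, the set $B=KV$ is automatically open, left $K$-stable (since $k_0KV=KV$), and disjoint from each $[\gamma]_G$: if $kv\in[\gamma]_G$ with $k\in K$ and $v\in V$, then $v=k^{-1}(kv)\in K[\gamma]_G$, contradicting $V\cap K[\gamma]_G=\emptyset$. So it suffices to show that the open set $G\setminus\bigcup_{\gamma\in\Gamma_1\cup\Gamma_2}K[\gamma]_G$ is nonempty, i.e.\ that the set $S:=\bigcup_{\gamma\in\Gamma_1\cup\Gamma_2}K[\gamma]_G$ is not all of $G$, and then to take $V$ inside its complement.

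The first main step is local finiteness: every relatively compact open $W\subseteq G$ meets only finitely many of the sets $K[\gamma]_G$, $\gamma\in\Gamma_1\cup\Gamma_2$. I would prove this by the classical argument behind the finiteness of the geometric side of the trace formula. Fix $i\in\{1,2\}$ and a compact $\Omega_i$ with $G=\Gamma_i\Omega_i$ (available since $\Gamma_i$ is uniform). If $K[\gamma]_G\cap W\neq\emptyset$, then $x\gamma x^{-1}$ lies in the compact set $C:=\overline{KW}$ for some $x\in G$; writing $x=\delta\omega$ with $\delta\in\Gamma_i$ and $\omega\in\Omega_i$ gives $\delta^{-1}\gamma\delta\in\Gamma_i\cap\bigl(\Omega_iC\Omega_i^{-1}\bigr)$, which is finite because $\Gamma_i$ is discrete and $\Omega_iC\Omega_i^{-1}$ is compact. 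Hence such $\gamma$ lie in finitely many $\Gamma_i$-conjugacy classes, so $[\gamma]_G$, and therefore $K[\gamma]_G$, takes only finitely many values; running $i$ over $\{1,2\}$ gives the claim. Moreover each $\gamma\in\Gamma_i$ is semisimple (cocompactness rules out unipotent elements), so $[\gamma]_G$ is closed, hence so is $K[\gamma]_G$ as $K$ is compact; combined with local finiteness this shows $S$ is closed.

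For the endgame I would apply the previous step to a relatively compact open neighbourhood $W$ of $e$. Of the finitely many sets $K[\gamma]_G$ meeting $W$, exactly one equals $K=K[e]_G$ (note $e\in K\cap W$, and $K[\gamma]_G\neq K$ for $\gamma\neq e$ by Lemma~\ref{lem4.1.4}); list the others as $K[\gamma_1]_G,\dots,K[\gamma_r]_G$ with each $\gamma_j\neq e$. By Lemma~\ref{lem4.1.5}, $e\notin K[\gamma_j]_G$ for every $j$, so $S':=\bigcup_{j=1}^{r}K[\gamma_j]_G$ is closed and does not contain $e$. Then $W\setminus S'$ is an open neighbourhood of $e$ whose intersection with $S$ lies inside $K$; since $K$ has empty interior in $G$ (it is a compact, hence proper, subgroup of the connected group $G$), the open set $V:=(W\setminus S')\setminus K$ is nonempty, and it is disjoint from $S$ by construction. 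Setting $B:=KV$ completes the proof.

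The step I expect to be the main obstacle is this local finiteness claim, and in particular the bookkeeping linking it to the $K$-saturations: one must check that ``$[\gamma]_G$ meets a fixed compact set'' forces $\gamma$ into finitely many $\Gamma_i$-conjugacy classes — this is exactly where uniformity and discreteness of the lattices enter — and that this persists after passing to $K[\gamma]_G$ and taking the union over $\Gamma_1\cup\Gamma_2$. The other ingredients (the reduction of the first paragraph, closedness of $K\cdot(\mathrm{closed})$ and of locally finite unions of closed sets, and the emptiness of the interior of $K$ in $G$) are routine point-set topology.
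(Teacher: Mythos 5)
Your argument is correct and takes essentially the same route as the paper's proof: both use local finiteness of the sets $K[\gamma]_G$ meeting a relatively compact neighbourhood of $e$, their closedness, and Lemmas~\ref{lem4.1.4}--\ref{lem4.1.5} to find a small open set near $e$ avoiding all of them, then remove $K$ and pass to the left $K$-saturation (the paper takes $B=KV\cap K^{c}$, you take $B=KV$ with $K$ already deleted from $V$ --- the same construction). The only difference is that you spell out the local-finiteness and closedness claims (via the $G=\Gamma_i\Omega_i$ compactness argument and semisimplicity of lattice elements) that the paper simply asserts.
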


\begin{proof}
    Let $U'$ be a relatively compact open set containing $e$ in $G$. Let $U=KU'$. Then $U$ is relatively compact and therefore it intersects at the most finitely many conjugacy classes $[\gamma]_G$. Since, the natural map $G \rightarrow K \backslash G$ is proper, $K [\gamma]_G$ is closed in $G$. Since $U$ is $K$-stable, $K[\gamma]_G \cap U \neq \emptyset$ if and only if $[\gamma]_G \cap U \neq \emptyset$. Let $E : = \bigcup \limits_{\gamma \neq e} (K [\gamma]_G \cap U)$. Since $E$ is a finite union of closed sets, it is closed and $K$-stable subset of $U$. From Lem.~\ref{lem4.1.5}, we conclude that $e \notin E$. Choose an open set $V$ containing $e$ such that $E \cap V =\emptyset$. Now, let $B=KV \cap K^c$. Then $B$ is the desired open set in $G$.
\end{proof} 
\medskip

\subsection{Proof of Theorem \ref{thm1.3.1}}
We have two uniform torsion free lattices $\Gamma_1$ and $\Gamma_2$ in $G$. So by Eq. (\ref{2.1.2}) $$  (\rho_{\Gamma_i}, L^2(\Gamma_i \backslash G, V_{\tau})) = \widehat{\bigoplus_{\pi \in \widehat{G}}} \ m(\pi, \Gamma_i) V_{\pi} $$ for $ i =1, 2$. Let $t_{\pi}= m(\pi, \Gamma_1)-m(\pi, \Gamma_2)$. By hypothesis there exists a finite set $\mathcal{S} \subset \widehat{G}_{\tau}$ such that $t_{\pi}=0$ for all $\pi \in \widehat{G}_{\tau} \backslash \mathcal{S}$. For $f \in C^{\infty}_c(G, K, V_{\tau})$, $\rho_{\Gamma_i}(f)$ is zero on $V_{\pi}$ if $\pi \notin \widehat{G}_{\tau}$ by the proposition \ref{prop4.1.3}. Let $h_f(y)=\Tr(f(y))$. Therefore from the Selberg trace formula \ref{3.1.8}, we have $$ \sum_{\pi \in \mathcal{S}} t_{\pi} \Tr(\pi(h_f)) = \sum_{[\gamma] \in [\Gamma_1]_G \cup [\Gamma_2]_G} (a( \gamma, \Gamma_1)-a(\gamma, \Gamma_2)) O_{\gamma}(h_f);$$ 

$$ \sum_{\pi \in \mathcal{S}} t_{\pi} \int\limits_G h_f(y) \phi_{\pi}(y)\ dy = \sum_{[\gamma] \in [\Gamma_1]_G \cup [\Gamma_2]_G} (a( \gamma, \Gamma_1)-a(\gamma, \Gamma_2)) O_{\gamma}(h_f).$$

Let $\phi=\sum\limits_{\pi \in \mathcal{S}} t_{\pi} \phi_{\pi}$. Then $$ \int\limits_G h_f(y) \phi(y)  \ dy = \sum_{[\gamma] \in [\Gamma_1]_G \cup [\Gamma_2]_G} (a(\gamma, \Gamma_1)-a(\gamma, \Gamma_2)) O_{\gamma}(h_f).$$ 

Let B be the open set from the proposition \ref{prop4.1.6}. For any $f \in C^{\infty}_c(G, K, V_{\tau})$ supported in B, the orbital integrals on the right side is zero. For such functions $f$, we have  $$ \int\limits_B h_f(y) \phi(y)\ dy =0.$$ 

From Lem.~\ref{lem4.1.2}, the functions $h_f$ separates points in $B$. Hence $\phi$ must vanish on the open subset $B$ of $G$. Since $\phi$ is real analytic (see \ref{thm2.2.2}), it vanishes on all of $G$. By the linear independence of functions $\phi_{\pi}$ (see \ref{thm2.2.1}), we conclude that $m(\pi, \Gamma_1)=m(\pi, \Gamma_2)$ for all $\pi \in \widehat{G}_{\tau}$.

\section{Proof of second main result Thm.~\ref{thm2} }\label{sec5}

\subsection{Some observations} In this subsection, we make some observations that will be useful in the proof of Thm.~\ref{thm2}.\smallskip

Recall (see Sec.\ \ref{iso-rep}) that 
$\mathcal A^\infty(G/K, \tau)$ is the space of smooth sections of the vector bundle $E_\tau$ and $V_{\Gamma, \tau}$ is the subspace defined by 
\[V_{\Gamma, \tau} = \{ \phi \in \mathcal A^\infty(G/K, \tau) \ | \ \phi(\gamma x)=\phi(x)\ \text{ for all}\  \gamma \in \Gamma, x \in G\}.
\]

Let $n=\text{dim}(V_{\tau})$. Recall the choice of orthornormal basis $\{v_i\}_{i=1}^{n}$ of $V_{\tau}$ from Sec.\ref{orthonormalbasis}. For $\phi \in V_{\Gamma, \tau}$, write $\phi(x)=\sum\limits_{i=1}^{n} \phi_i(x) v_i$ for all $x \in G$, where each $\phi_i$ is smooth complex valued function on $\Gamma \backslash G$. \noindent  Recall that $V_{\chi, \Gamma, \tau}$ is the $\chi$-eigenspace of $V_{\Gamma, \tau}$ with respect to the action of $\mathfrak{Z(g)}$. For any $X \in \mathfrak{Z(g)}$ and for all $x \in G$, we can see that
\begin{equation}\label{6.1.2}
X \cdot \phi (x)  =\sum\limits_{i=1}^{n} X \cdot\phi_i (x)\  v_i.
\end{equation}

 Therefore, we conclude that $X \cdot \phi_i = \chi(X) \phi_i$ for all $i$.\smallskip
 
 Each $\tau(k)$ has a matrix representation with respect to the chosen orthornormal basis of $V_{\tau}$. Let the $(i,j)$-th entry of $\tau(k)$ be denoted by $a_{ij}(k)$. \smallskip

 Note that $\phi(xk) =\tau(k^{-1})(\phi(x))$ for all $x \in G$ and $k \in K$.Thus we have
 
 \begin{equation}\label{6.1.1}
 \begin{split}
\sum_i \phi_i(xk) \ v_i & = \tau(k^{-1})(\sum_j \phi_j(x)\ v_j)\\
& = \sum_l \left(\sum_j a_{lj}(k^{-1})\phi_j(x) \right)v_l.\\
 \end{split}
 \end{equation}

Therefore, $\phi_i(xk)=\sum\limits_{j=1}^{n} a_{ij}(k^{-1}) \ \phi_j(x)$\ for all $x \in G, \
 k \in K$.


\subsection{Proof of Thm.~\ref{thm2}}

We denote $[\chi]=\{ \pi \in \widehat{G} : \
\text{the  infinitesimal character of} \ \pi \ \text{is}\ \chi\}$. Recall that $$ L^2(\Gamma \backslash G)= \widehat {\bigoplus\limits_{\pi \in \widehat{G}}} \ m(\pi, \Gamma) W_{\pi}.$$

 There are $m(\pi, \Gamma)$ many copies of $W_{\pi}$ say, $\{ W_{\pi_t} : 1 \leq t \leq m(\pi, \Gamma)\}$. Let $P_{\pi_t}$ be the projection onto $W_{\pi_t}$.\smallskip

Clearly, $\phi_i \in L^2(\Gamma \backslash G)$. We have
\[
\phi_i= \sum_{\pi \in [\chi]} \sum_{t=1}^{m(\pi, \Gamma)} P_{\pi_t} \phi_i.
\]

For any $1 \leq t \leq m(\pi, \Gamma)$, clearly $P_{\pi_t} \in \Hom_G(L^2(\Gamma \backslash G), W_{\pi_t})$. Using Eq.~\eqref{6.1.1}, we get

\begin{equation}\label{6.2.1}
\begin{split}
\pi(k) P_{\pi_t} \phi_i & = P_{\pi_t} \rho(k) \phi_i\\
& = P_{\pi_t} \sum_{j=1}^{n} a_{ij}(k^{-1})\phi_j\\
& = \sum_{j=1}^{n} a_{ij}(k^{-1}) P_{\pi_t} \phi_j.\\
\end{split}
\end{equation}

Let $F_t:=$ Span of $\{ P_{\pi_t} \phi_j : 1 \leq j \leq n\}$. From \eqref{6.2.1}, it follows that  $F_t$ is a finite dimensional representation of $K$. Let $\tau^\vee$ be the dual representation of $\tau$ of $K$ on the dual space $V_{\tau}^*$. Let $\{v_i^\ast\}_{i=1}^n$ be the dual basis of $V_{\tau}^*$. Then 
there is $K$-isomorphism between $F_t$ and $V_{\tau}^*$ which maps each $P_{\pi_t} \phi_i$ to $v_i^*$.\smallskip

Now we assume $\tau$ is irreducible (and hence $\tau^\vee$ as well). For a fixed $t$, let $W_{\pi_t}(\tau^\vee)$ be the isotypic component of $\tau^\vee$ in $W_{\pi_t}$. Then $F_t \subset W_{\pi_t}(\tau^\vee)$. We have a decomposition 
\[
W_{\pi_t}(\tau^\vee)=\bigoplus_{p=1}^{M_{\tau^\vee}} H_p,
\] 
where, each $H_p$ is an irreducible representation of $K$ isomorphic to $\tau^\vee$. Note that $M_{\tau^\vee} = \text{dim}(\Hom_K(\tau^\vee, \pi))$. \smallskip

For each $ p$, let $\{ \phi_{t, p, s}\}_{s=1}^{n}$ be a basis of $H_p$ satisfying $\phi_{p, t, s}(xk)=\sum\limits_{u=1}^{n} a_{us}(k^{-1}) \ \phi_{t, p, u}(x)$. Note that this is possible because each $H_p$ is isomorphic to $F_t$.\smallskip

 Now, $P_{\pi_t} \phi_i = \sum\limits_p \sum\limits_s \alpha_{p,s,i}\  \phi_{t, p, s}$. For each $p$, the matrix $(\alpha_{p,s,i})_{s i}$ is matrix which represents a $K$-homomorphism between $F_t$ and $H_p$.  Therefore, $\alpha_{p, s, i} = \alpha_p \delta_{s, i}$ for some scalar $\alpha_p$ by Schur's lemma.\smallskip

 Therefore, $P_{\pi_t} \phi_i = \sum\limits_p \alpha_p\ \phi_{t, p, i}$. We define $\phi_{t, p}(x):= \sum\limits_{i=1}^{n} \phi_{t, p, i}(x)\ v_i$ for all $t$ and $p$. Hence, we get
 \begin{equation}
 \begin{split}
 \phi & = \sum\limits_{i=1}^{n} \phi_i v_i \\
 & = \sum_{i=1}^{n} \left(\sum_{\pi \in [\chi]} \sum_{t =1}^{m(\pi, \Gamma)} P_{\pi_t}\ \phi_i\right)\ v_i\\
 & = \sum_{i=1}^{n} \sum_{\pi \in [\chi]} \sum_{t =1}^{m(\pi, \Gamma)} \sum_{p=1}^{M_{\tau^\vee}} \alpha_p\ \phi_{t, p, i}\ v_i\\
 & = \sum\limits_{\pi \in [\chi]} \sum_{t=1}^{m(\pi, \Gamma)} \ \sum_{p=1}^{M_{\tau^\vee}} \alpha_p\ \phi_{t, p}.
  \end{split}
 \end{equation}

We conclude that  dim $V_{\chi, \Gamma, \tau} \leq \sum \limits_{\pi \in [\chi]}m(\pi, \Gamma) \ { \rm dim}(\Hom_K(\tau^\vee, \pi))$.\smallskip

Conversely, for every $t$ and for every $p$, choose a basis $\{ \phi_{t, p, i}\}_{i=1}^{n}$ of $H_p$ such that 
\[\phi_{t, p, i}(xk)= \sum\limits_{j=1}^{n} a_{ji}(k^{-1}) \ \phi_{t, p, j}(x).\]

 Let $\phi_{t, p}= \sum\limits_{i=1}^{n}\phi_{t,p,i} \ v_i$. Then $\phi_{t, p}(xk)=\tau(k^{-1})(\phi_{t,p}(x))$. Since $X \cdot \phi_{t, p, i}= \chi(X)\ \phi_{t, p, i}$ for all $X \in \mathfrak{Z(g)}$, it follows that  $\phi_{t, p} \in V_{\chi, \Gamma, \tau}$. \smallskip

We conclude that $\sum\limits_{\pi \in [\chi]} m(\pi, \Gamma) \ {\rm dim}(\Hom_K(\tau^\vee, \pi)) \leq {\rm dim} \ V_{\chi, \Gamma, \tau}$.\smallskip

Therefore, we have the equality 
\begin{equation} \label{MM}
{\rm dim} \ V_{\chi, \Gamma, \tau} = \sum_{\pi \in [\chi]}m(\pi, \Gamma)\ {\rm dim}(\Hom_K(\tau^\vee, \pi))\ \text{ for all}\ \tau \in \widehat{K}.
\end{equation}

Now, let us consider the general case that 
$\tau$ is a finite dimensional representation of $K$ (possibly reducible). Let $\tau \cong \bigoplus \limits_{i=1}^{q}
m_i \ \tau_i$ be a decomposition of $\tau$ into irreducible representations of $K$. Hence $\tau^\vee \cong \bigoplus \limits_{i=1}^{q}
m_i \ (\tau_i)^\vee$. \smallskip

Let $V_{\Gamma, \tau}$ be as in Sec.\ \ref{iso-rep}. The center $\mathfrak{Z(g)}$ acts on this space. Therefore for any character $\chi \in \widehat{\mathfrak{Z(g)}}$, we can consider the $\chi$-eigenspace $V_{\chi, \Gamma, \tau} \subset V_{\Gamma, \tau}$. We observe that
\[ {\rm dim} \ V_{\chi, \Gamma, \tau}=\sum_{i=1}^{q}\ m_i\ {\rm dim}\ V_{ \chi, \Gamma, \tau_i}.\] 

We can use the argument culminating into Eq.\eqref{MM} for each $\tau_i$ and the additivity properties of $\Hom$ spaces with respect to decomposition of $\tau$ to complete the proof of Thm.\ \ref{thm2}. 
\smallskip

 \section{Discrete series representations and cohomology}\label{DS}
In this section, we comment about the case when $G$ is non-compact connected semisimple group that admits a discrete series representation. (This is same as saying rank $G$= rank $T$, where $T$ is a `{\it compact}' Cartan subgroup.) It is well-known that for a given $\chi \in \widehat{\mathfrak{Z(g)}}$, and a $K$-type $\tau$  there exists at most one (up to infinitesimal equivalence) discrete series representation $\pi_{\lambda + \rho}$ with the infinitesimal character $\chi$ and minimal $K$-type $\tau$ 
where $\lambda \in \mathfrak{t}^*_{\C}$ such that $\lambda + \rho $ is regular and integral linear form. Also, $\Hom_K(\tau, \pi_{\lambda + \rho})=1$.\smallskip

Therefore  Thm.~ \ref{thm2} implies the dimension of $V_{\chi, \Gamma, \tau^\vee}$ is equal to the multiplicity of the above discrete series $\pi_{\lambda+\rho}$ in $L^2(\Gamma \backslash G)$.\medskip

 For $\lambda \in \mathfrak{t}^*_{\C}$, there exists a $G$-equivariant holomorphic line bundle $\mathcal{L}_{\lambda}$ on $G/T$. Under a mild condition on $\lambda$ (see 7.66 \cite{MR1104438}), the theorem [Thm. 7.65, \cite{MR1104438}] implies the $L^2$-cohomology $H^*_2(\Gamma \backslash \mathcal{L}_{\lambda})$ is non-vanishing in exactly one degree $q_{\lambda}$ (depends on $\lambda$) and $$ {\rm dim}\ H_2^{q_\lambda}(\Gamma \backslash \mathcal{L}_{\lambda})=m(\pi_{\lambda +\rho}, \Gamma).$$

Hence, the dimension of the $\chi$-eigenspace of the automorphic forms of type $\tau^\vee$
is equal to the $q_{\lambda}$-th $L^2$-cohomology of the automorphic line bundle $\Gamma \backslash \mathcal{L}_{\lambda}$.

\end{document}